\newcommand{\re}{\text{Re}}
\newcommand{\im}{\text{Im}}
\newcommand{\BB}{\mathbb B}
\newcommand{\RR}{\mathbb R}
\newcommand{\pat}{\partial_t}
\newcommand{\pax}{\partial_x}
\newcommand{\paa}{\partial_\alpha}
\newcommand{\jeps}{\mathcal{J}_\epsilon*}
\newcounter{comentcount}
\newcounter{teocount}
\newcounter{propcount}
\newtheorem{prop}[propcount]{Proposition}
\newtheorem{thm}[teocount]{Theorem}  
\newtheorem{defi}{Definition}
\newenvironment{coment}
{\stepcounter{comentcount} {\bf \tt Remark} {\bf\tt\arabic{comentcount}} }{ }
\title{The confined Muskat problem: differences with the deep water regime}
\author{Diego C\'ordoba Gazolaz$^{\mbox{{\footnotesize 1},{\footnotesize 4}}}$, Rafael Granero-Belinch\'on$^{\mbox{{\footnotesize 2},{\footnotesize 4}}}$ and Rafael Orive Illera$^{\mbox{{\footnotesize 3},{\footnotesize 4},{\footnotesize 5}}}$}
\begin{document}

\maketitle 

\footnotetext[1]{Email: \texttt{dcg@icmat.es}}

\footnotetext[2]{Email: \texttt{r.granero@icmat.es}}

\footnotetext[3]{Email: \texttt{rafael.orive@icmat.es}}

\footnotetext[4]{Instituto de Ciencias Matem\'aticas CSIC-UAM-UC3M-UCM, Consejo Superior de Investigaciones Cient\'ificas, C/Nicol\'as Cabrera, 13-15, Campus de Cantoblanco, 28049 - Madrid}

\footnotetext[5]{Departamento de Matem\'aticas, Facultad de Ciencias, Universidad Aut\'onoma de Madrid, Campus de Cantoblanco, 28049 - Madrid}

\vspace{0.3cm}

\begin{abstract}
We study the evolution of the interface given by two incompressible fluids with different densities in the porous strip $\RR\times[-l,l]$. This problem is known as the Muskat problem and is analogous to the two phase Hele-Shaw cell. The main goal of this paper is to compare the qualitative properties between the model when the fluids move without boundaries and the model when the fluids are confined. We find that, in a precise sense, the boundaries decrease the diffusion rate and the system becomes more singular.
\end{abstract}

\vspace{0.3cm}

\textbf{Keywords}: Darcy's law, Hele-Shaw cell, Muskat problem, maximum principle, well-posedness, blow-up, ill-posedness.

\textbf{Acknowledgments}: The authors are supported by the Grants MTM2011-26696 and SEV-2011-0087 from Ministerio de Ciencia e Innovaci\'on (MICINN). Diego C\' ordoba was partially supported by StG-203138CDSIF of the ERC. Rafael Granero-Belinch\'on is grateful to A. Castro and F. Gancedo for their helpful comments during the preparation of this work. 

\section{Introduction}\label{sec0}
In this paper we study the evolution of the interface between two different incompressible fluids with the same viscosity in a flat two-dimensional strip. This problem has an interest because it is a model of an aquifer or an oil well, see  \cite{Muskat}. In this phenomena, the velocity of a fluid in a porous medium satisfies Darcy's law 
\begin{equation}
\frac{\mu}{\kappa}v=-\nabla p-g\rho e_2,
\label{eq1} 
\end{equation}
where $\mu$ is the dynamic viscosity, $\kappa$ is the permeability of the medium, $g$ is the acceleration due to gravity, $\rho$ is the density of the fluid, $p$ is the pressure of the fluid and $v$ is the incompressible field of velocities, see \cite{bear, bn}.

Equation \eqref{eq1} has also been considered as a model of the velocity for cells in tumor growth, see for instance \cite{F,P} and references therein.

The motion of a fluid in a two-dimensional porous medium is analogous to the Hele-Shaw cell problem, see \cite{H-S}. In this case the fluid is trapped between two parallel plates. The mean velocity in the cell is described by
\begin{equation*}
\frac{12\mu}{b^2}v=-\nabla p-g\rho e_2,
\end{equation*}
where $b$ is the (small) distance between the plates.

We consider the two-dimensional flat strip $S=\RR\times(-l,l)\subset\RR^2$ with $l>0$. In this strip we have two immiscible and incompressible fluids with the same viscosity and different densities, $\rho^1$ in $S^1(t)$ and $\rho^2$ in $S^2(t)$, where $S^i(t)$ denotes the domain occupied by the $i-$th fluid. The curve $$z(\alpha,t)=\{(z_1(\alpha,t),z_2(\alpha,t)): \: \alpha\in\RR\}$$ is the interface between the fluids. We suppose that the initial interface $f_0(x)$ is a graph and $|f_0(x)|\leq l$ for all $x$. The character of being a graph is preserved at least for a short time (see Section \ref{sec2}). The Rayleigh-Taylor condition is defined as
\begin{equation*}
RT(\alpha,t)=-(\nabla p^2(z(\alpha,t))-\nabla p^1(z(\alpha,t)))\cdot\partial_\alpha^\bot z(\alpha,t).
\end{equation*}
Due to the incompressiblity of the fluids and using that the curve can be parametrized as a graph, the Rayleigh-Taylor condition reduces to the sign of the jump in the density:
\begin{equation*}
RT=g(\rho^2-\rho^1)>0.
\end{equation*}
This condition is satisfied if the denser fluid is below.

\setcounter{figure}{0}
\begin{figure}[t]
		\begin{center}
		\includegraphics[scale=0.6]{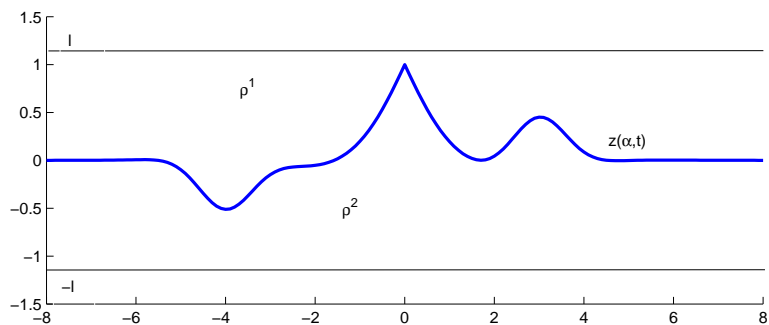} 
		\end{center}
		\caption{Physical situation for an interface $z(\alpha,t)$ in the strip $\RR\times (-l,l)$.}
\label{bandafig}
\end{figure}

We consider the velocity field $v$, the pressure $p$ and the density $\rho$ 
\begin{equation}
\rho(t)=\rho^1\textbf{1}_{S^1\left (t\right )}+\rho^2\textbf{1}_{S^2\left (t\right )},
\label{eq3a}
\end{equation}
in the whole domain $S$. We also consider the conservation of mass equation, so we have a weak solution to the following system of equations
\begin{equation}
\left\{\begin{array}{l}
\displaystyle\frac{\mu}{\kappa}v(x,y,t) = -\nabla p(x,y,t)-g\rho e_2\quad\text{in } S, \: t>0,\label{eq4}\\
\nabla\cdot v(x,y,t) = 0\quad\text{in } S, \: t>0,\\
\pat \rho(x,y,t)+v\cdot\nabla\rho(x,y,t)=0\quad\text{in } S,\: t>0,\\
f(x,0) = f_0(x) \quad\text{in } \RR,\\
v(x,\pm l)\cdot n=0 \quad\text{in } \RR,
\end{array}\right.
\end{equation}
\emph{i.e.} with impermeable boundary conditions for the velocity.. 

We denote by $v^1(x,y,t)$ the velocity field in $S^1(t)$ and by $v^2(x,y,t)$ the velocity field in $S^2(t)$. Because of the incompressibility condition, the normal components of the velocities $v^1,v^2$ are continuous through the interface. Moreover, the interface moves along with the fluids. Therefore, if initially we have an interface which is the graph of a function, we have the following equation for the interface:

\begin{equation}
\pat f(x,t)=(-\pax f(x,t),1)\cdot v^i(x,f(x,t),t)=\sqrt{1+(\pax f(x,t))^2}n\cdot v^i, 
\label{eq5}
\end{equation}
where $n$ denotes the unit normal to the interface.

In each subdomain $S^i(t)$ the fluids satisfy Darcy's law \eqref{eq1},
\begin{equation}
\frac{\mu}{\kappa}v^i(x,y,t)=-\nabla p^i(x,y,t)-g\rho^i(0,1)\quad\text{in } S^i(t),
\label{eq6} 
\end{equation}
and the incompressibility condition
\begin{equation}
\nabla\cdot v^i(x,y,t)=0 \quad\text{in } S^i(t).
\label{eq7}
\end{equation}

We define the following dimensionless parameter (see \cite{bona2008asymptotic} and references therein)
\begin{equation}\label{dimensionless}
\mathcal{A}=\frac{\|f_0\|_{L^\infty}}{l}. 
\end{equation}
This parameter is called the \emph{nonlinearity} (or \emph{amplitude}) parameter and we have $0\leq\mathcal{A}\leq 1$. 

The case $\mathcal{A}=1$ is the case where $f$ reaches the boundaries and we call it the \emph{large amplitude regime}. In \cite{knupfer2010darcy} they consider a two dimensional droplet in vacuum over a plate driven by surface tension. 

The case $\mathcal{A}=0$ is the \emph{deep water regime} for which the equation reduces to
\begin{equation}\label{full}
\pat f=\frac{\rho^2-\rho^1}{2\pi}\text{P.V.}\int_\RR \frac{(\pax f(x)-\pax f(x-\eta))\eta}{\eta^2+(f(x)-f(x-\eta))^2}d\eta.
\end{equation}
It has been shown, for equation \eqref{full}, local existence in Sobolev spaces when the Rayleigh-Taylor condition holds (see \cite{c-g07}), a maximum principle for the $L^\infty$ norm of $f$ and also a maximum principle for $\|\pax f\|_{L^\infty}$ (see \cite{c-g09}). For initial data with $\|\pax f_0\|_{L^\infty}<1$ follows global existence of $W^{1,\infty}$ solution (see \cite{ccgs-10}). For large initial datum there are \emph{turning waves}, \emph{i.e} a blow up for $\|\pax f\|_{L^\infty}$ (see \cite{ccfgl}). For other results see \cite{ambrose2004well, castro2012breakdown, ccfgl, c-c-g10, KK, SCH}.

The equation for the evolution of the interface in our bounded domain, which is deduced in Section \ref{sec1}, is 
\begin{multline}
\pat f(x,t) = \frac{\rho^2-\rho^1}{8l}\text{P.V.}\int_\RR\bigg{[}\left (\partial_xf\left (x\right )-\partial_xf\left (x-\eta\right )\right)\Xi_1(x,\eta,f)\\
+ (\partial_xf\left (x\right )+\partial_xf\left (x-\eta\right )\Xi_2(x,\eta,f)\bigg{]}d\eta=\frac{\rho^2-\rho^1}{4l}A[f](x),
\label{eq0.1}
\end{multline}
where the singular kernels $\Xi_1$ and $\Xi_2$ are defined as 
\begin{equation}
\Xi_1(x,\eta)=\frac{\sinh\left(\frac{\pi}{2l}\eta\right)}{\cosh \left(\frac{\pi}{2l}\eta\right)-\cos(\frac{\pi}{2l}(f(x)-f(x-\eta)))},
\label{Xi1}
\end{equation}
corresponding to the singular character of the problem, and 
\begin{equation}
\Xi_2(x,\eta)=\frac{\sinh\left(\frac{\pi}{2l}\eta\right)}{\cosh \left(\frac{\pi}{2l}\eta\right)+\cos(\frac{\pi}{2l}(f(x)+f(x-\eta)))},
\label{Xi2}
\end{equation}
which becomes singular when $f$ reaches the boundaries. The text $\text{P.V.}$ denotes principal value. As for the whole plane case (see \cite{ccgs-10, c-g-o08}) the spatial operator $A[f](x)$ can be written as an $x$-derivative. Indeed, 
\begin{multline}\label{eqderiv}
A[f](x)=\frac{2l}{\pi}\text{P.V.}\int_\RR\partial_x\left(\arctan\left(\frac{\tan\left(\frac{\pi}{2l}\frac{f(x)-f(x-\eta)}{2}\right)}{\tanh\left(\frac{\pi}{2l}\frac{\eta}{2}\right)}\right)\right)d\eta\\
+\frac{2l}{\pi}\text{P.V.}\int_\RR\partial_x\left(\arctan\left(\tan\left(\frac{\pi}{2l}\frac{f(x)+f(x-\eta)}{2}\right)\tanh\left(\frac{\pi}{2l}\frac{\eta}{2}\right)\right)\right)d\eta
\end{multline}
and we conclude the mean conservation
\begin{equation}\label{mean}
\int_\RR f(x,t)dx=\int_{\RR}f_0(x)dx.
\end{equation}

When we do not parametrize the curve as a graph, \emph{i.e.}, we consider $z(\alpha)=(z_1(\alpha),z_2(\alpha))$, we obtain the equation
\begin{multline}\label{eqcurva}
\pat z= \frac{\rho^2-\rho^1}{4\pi}\text{P.V.}\int_\RR\left[\frac{(\partial_\alpha z(\alpha)-\partial_\alpha z(\eta))\sinh(z_1(\alpha)-z_1(\eta))}{\cosh(z_1(\alpha)-z_1(\eta))-\cos(z_2(\alpha)-z_2(\eta))}\right.\\
+\left.\frac{(\partial_\alpha z_1(\alpha)-\partial_\alpha z_1(\eta),\partial_\alpha z_2(\alpha)+\partial_\alpha z_2(\eta))\sinh(z_1(\alpha)-z_1(\eta))}{\cosh(z_1(\alpha)-z_1(\eta))+\cos(z_2(\alpha)+z_2(\eta))}\right]d\eta.
\end{multline}

As our interface moves in a bounded medium the correct space to consider is
$$
H^s_l=H^s(\RR)\cap \{f:\|f\|_{L^\infty}<l\}.
$$

The density $\rho$ defined as in \eqref{eq3a} is a weak solution of the conservation of mass equation present in \eqref{eq4} if and only if the interface verifies the equation \eqref{eq5} (see Proposition \ref{consermass} in Section \ref{sec1} below). It also follows (see \ref{limiota}) that if we take the limit $\mathcal{A}\rightarrow0$ we recover the equation \eqref{full} (see \cite{c-g07}).

In a recent work \cite{e-m10}, J. Escher and B-V.Matioc studied the problem \eqref{eq6}, \eqref{eq7} in the case with different viscosities and surface tension in a periodic (in $x$) domain when $0<\mathcal{A}<1$. They obtained an abstract evolution equation for the interface and showed well-posedness in the classical sense when the Rayleigh-Taylor condition is satisfied and the interface is in a neighbourhood of the zero function in certain H\"{o}lder spaces. They consider the problem as a problem in two coupled domains. The domains are coupled by the interface and by the Laplace-Young condition
$$
p^2(x,f(x,t),t)-p^1(x,f(x,t),t)=\gamma \kappa[f],
$$
where $\kappa[f]$ denotes the curvature of the interface $f(x,t)$ and $\gamma$ denotes the surface tension coefficient.

In Section \ref{sec2} we study the similarities between the case $\mathcal{A}=0$ and $0<\mathcal{A}<1$. First, we prove local well-posedness in Sobolev spaces (see Section \ref{sec2.1}) and instant analyticity in a growing complex strip when the Rayleigh-Taylor condition is achieved (see Section \ref{sec2.2}). The last similarity studied in Section \ref{sec2.3} is that for arbitrary initial curves which are analytic there is an unique local solution, which is analytic, both forward and backward in time. We remark that for this result the Rayleigh-Taylor condition is not needed. The proofs follows the steps of the paper \cite{ccfgl}. Here we show that the contribution from the boundary does not affect the \emph{a priori} estimates from \cite{ccfgl}. In Section \ref{sec2.4} we show an ill-posedness result in Sobolev spaces. The key point of this result is that we do not need global existence for some class of solutions to prove the result (compare with \cite{c-g07} and \cite{SCH}).

The main purpose of this work is to study the differences between the case with infinite depth and the case with bounded medium. This is done in Section \ref{sec3}, where we study some qualitative properties of the solution. We prove the maximum principle for $\|f(t)\|_{L^\infty}$ and also for $\|\pax f\|_{L^\infty}$ by studying the evolution of the maximum or the minimum values. The ODEs coming from these analysis have local and non-local terms and the main dificulty is to compare these two different kind of terms in order to ensure the decay. In particular, we prove the following decay estimate for $\|f(t)\|_{L^\infty}$ in a confined medium:
\begin{equation}\label{decaydesi}
\frac{d}{dt}\|f(t)\|_{L^\infty}\leq-c(\|f_0\|_{L^1},\|f_0\|_{L^\infty},\rho^2,\rho^1,l)e^{-\frac{\pi\|f_0\|_{L^1}}{l\|f(t)\|_{L^\infty}}}.
\end{equation}
As a corollary we conclude that the unique one-signed, integrable, stationary solution is the rest state. Let us observe that the natural boundary condition for the velocity, $v\cdot n=0$, imposes that if our initial interface is close enough to the boundary the evolution of the maximum is very slow. Due to this fact, we obtain the slow decay inequality \eqref{decaydesi}. 

We show that if the initial data is in a region depending on $\|f_0\|_{L^\infty}$ and $\|\pax f_0\|_{L^\infty}$ then we have a maximum principle for $\|\pax f\|_{L^\infty}$ in a confined medium.
Indeed, we consider a smooth initial data $f_0$ in the Rayleigh-Taylor stable regime such that the following conditions holds:
\begin{equation}\label{H3}
\|\pax f_0\|_{L^\infty}\leq1,
\end{equation}
\begin{equation}\label{H4}
\tan\left(\frac{\pi\|f_0\|_{L^\infty}}{2l}\right)< \|\pax f_0\|_{L^\infty}\tanh\left(\frac{\pi}{4l}\right),
\end{equation}
and
\begin{multline}\label{H5}
\left(\|\pax f_0\|_{L^\infty}+|2(\cos\left(\frac{\pi}{2l}\right)-2)\sec^4\left(\frac{\pi}{4l}\right)|\|\pax f_0\|_{L^\infty}^3\right)\frac{\pi^3}{8l^3}\\
\times\frac{\left(1+\|\pax f_0\|_{L^\infty}\left(\|\pax f_0\|_{L^\infty}+\frac{\tan\left(\frac{\pi }{2l}\frac{\|\pax f_0\|_{L^\infty}}{2}\right)}{\tanh\left(\frac{\pi }{4l}\right)}\right)\right)}{6\tanh\left(\frac{\pi }{4l}\right)}\frac{\pi^2}{4l^2}\\
+4\tan\left(\frac{\pi }{2l}\|f_0\|_{L^\infty}\right)-4\|\pax f_0\|_{L^\infty}\cos\left(\frac{\pi }{l}\|f_0\|_{L^\infty}\right)\leq0.
\end{multline}

Moreover, if $(x(l),y(l))$ is the solution of the system
\begin{equation}\label{sisder}
\left\{ \begin{array}{ll}
         \tan\left(\frac{\pi x}{2l}\right)- y\tanh\left(\frac{\pi}{4l}\right)=0\\
        \left(y+|2(\cos\left(\frac{\pi}{2l}\right)-2)\sec^4\left(\frac{\pi}{4l}\right)|y^3\right)\frac{\left(1+y\left(y+\frac{\tan\left(\frac{\pi }{2l}\frac{y}{2}\right)}{\tanh\left(\frac{\pi }{4l}\right)}\right)\right)}{6\tanh\left(\frac{\pi }{4l}\right)}\left(\frac{\pi}{2l}\right)^5\\
        \qquad\qquad\qquad\qquad\qquad\qquad\qquad+4\tan\left(\frac{\pi }{2l}x\right)-4y\cos\left(\frac{\pi }{l}x\right)=0,
        \end{array}\right.
\end{equation}
and we have that $\|\pax f_0\|_{L^\infty}<y(l)$ and $\|f_0\|_{L^\infty}<x(l)$ we obtain 
$$
\|\pax f\|_{L^\infty}\leq 1.
$$

\setcounter{figure}{1}
\begin{figure}[t]
		\begin{center}
		\includegraphics[scale=0.3]{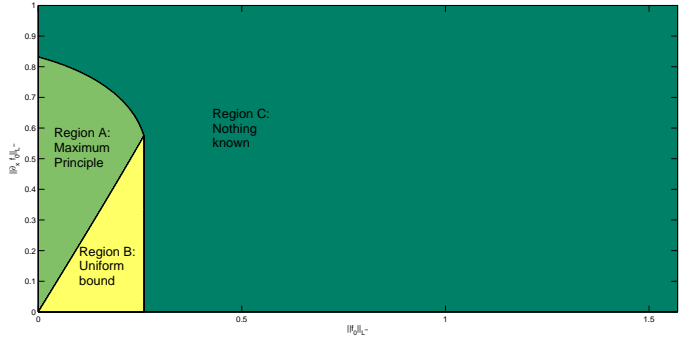} 
		\end{center}
\caption[Region in $(\|f_0\|_{L^\infty},\|\pax f_0\|_{L^\infty})$]{Different regions in $(\|f_0\|_{L^\infty},\|\pax f_0\|_{L^\infty})$ for the behaviour of $\|\pax f\|_{L^\infty}$ when $\pi=2l$.}		
		\label{2d_0}
\end{figure}

The effect of the boundaries is very important at this level, and we obtain a region (Region B in Figure~\ref{2d_0}) where we do not have maximum principle for $\|\pax f_0\|$ but we have an uniform bound $\|\pax f (t)\|_{L^\infty}\leq 1\;\forall t\geq0$. The region A is the region where we have maximum principle for the derivative. Due to the term coming from the effect of the boundaries, $\Xi_2$, the conditions that we obtain are much more restrictives than in the deep water regime (the case with infinite depth). The previous result gives us conditions on the smallness of $\mathcal{A}$ and $\|\pax f_0\|_{L^\infty}$ (which, for fixed amplitude, can be understood as the \emph{'wavelength'} of the wave) so, roughly speaking the Theorem says that if we are in the long wave regime (small amplitude and large wavelenght) then there is no \emph{turning effect}, \emph{i.e.} there are no shocks. We remark that if we take the limit $\mathcal{A}\rightarrow0$ we recover the result for the deep water regime 
contained in \cite{c-g09}

We study the formation of singularities in Section \ref{sec3.2}. The singularity is a blow up of $\|\pax f (t)\|_{L^\infty}$. Physically this result means that there are waves such that they \emph{'turn over'}. 
Moreover, we compare this result with the result for the deep water regime (see \cite{ccfgl}). In particular, we obtain firm numerical evidence of the following turning effect in a confined medium: There exist initial data $z_0(\alpha,0)=(z_1(\alpha,0),z_2(\alpha,0))$ such that in finite time the solution of \eqref{eqcurva} achieves the unstable regime only when the depth is finite. If the depth is infinite the same curves become graphs (see Section \ref{sec3.3}).

\begin{coment}
In order to simplify the notation we take $\mu/\kappa=g=1$ and we sometimes suppress the dependence on $t$. We denote $v_i$ the component $i-$th of the vector $v$. We remark that $v^i$ is the velocity field in $S^i(t)$. We write $n$ for the unitary normal to the curve $\Gamma$ vector and $\bar{n}$ for the non-unitary normal vector. We denote $\bar{\rho}=\frac{\rho^2-\rho^1}{4l}$. In the rest of the paper we take, without loss of generality, $2l=\pi$ and $\bar{\rho}=2$ if there is no other explicit statement.
\end{coment}


\section{The equation for the internal wave}\label{sec1}
In this section we obtain the equation for the interface $z(\alpha,t)$ in an explicit formula. First we have to add impermeable boundary conditions for $v$, $\emph{i.e.}$ $v(x,\pm l,t)\cdot n=0$. 

Using the incompressibility condition we have that there exists a scalar function $\Psi$ such that $v=\nabla^\perp \Psi$. The function $\Psi$ is the stream function. Then 
\begin{equation*}\label{pois}
\Delta \Psi=-\text{curl} (0,\rho)=\omega
\end{equation*}
where the vorticity is supported on the curve
$$
\omega(x,y)=\varpi(\alpha)\delta((x,y)-z(\alpha,t)),
$$
with amplitude
$$
\varpi(\alpha)=-(\rho^2-\rho^1)\paa z_2(\alpha).
$$
In this domain we need to obtain the Biot-Savart law. The Green function for the equation $\Delta u=f$ in the strip $\RR\times(0,2l)$ (with homogeneous Dirichlet conditions) is given by the convolution with the kernel
\begin{multline*}
G(x,y,\mu,\nu)=\frac{1}{2\pi}\sum_{n=-\infty}^\infty\bigg{[}\log\left(\sqrt{(x-\mu)^2+(y-(4nl+\nu))^2}\right)\\
-\log\left(\sqrt{(x-\mu)^2+(y-(4nl-\nu))^2}\right)\bigg{]}.
\end{multline*}
The Biot-Savart law in this strip is given by the kernel
$$
BS(x,y,\mu,\nu)=\nabla_{x,y}^\perp G(x,y,\mu,\nu)=\frac{1}{2\pi}\sum_{n=-\infty}^\infty\bigg{[}\frac{(\Gamma_n^+)^\perp}{|\Gamma_n^+|^2}-\frac{(\Gamma_n^-)^\perp}{|\Gamma_n^-|^2}\bigg{]},
$$
where 
$$
\Gamma_n^+=(x-\mu,y-(4nl+\nu)),\;\;\Gamma_n^-=(x-\mu,y-(4nl-\nu)).
$$
It is useful to consider complex variables notation. Then
$$
\overline{BS}(x,y,\mu,\nu)=\frac{1}{2\pi i}\sum_{n=-\infty}^\infty\bigg{[}\frac{1}{\Gamma_n^+}-\frac{1}{\Gamma_n^-}\bigg{]}.
$$
Fixed $n$, we compute the following
$$
\frac{1}{\Gamma_n^+}+\frac{1}{\Gamma_{-n}^+}=\frac{2(x-\mu+i(y-\nu))}{(x-\mu+i(y-\nu))^2+(4nl)^2},
$$
$$
\frac{1}{\Gamma_n^-}+\frac{1}{\Gamma_{-n}^-}=\frac{2(x-\mu+i(y+\nu))}{(x-\mu+i(y+\nu))^2+(4nl)^2}.
$$
We change variables ($y-l=y, \nu-l=\nu$) to recover the initial strip $S=\RR\times(-l,l)$, moreover, without lossing generality we take $l=\pi/2$. Due to the formula
\begin{equation*}
\frac{1}{z}+\sum_{n= 1}^\infty \frac{2z}{z^2+(2n\pi)^2}=\frac{1}{2}\coth\left(\frac{z}{2}\right),
\end{equation*}
we obtain that the Biot-Savart law in cartesian coordinates is given by
\begin{multline*}
BS(x,y,\mu,\nu) =\frac{1}{4\pi}\left(-\frac{\sin(y-\nu)}{\cosh(x-\mu)-\cos(y-\nu)}-\frac{\sin(y+\nu)}{\cosh(x-\mu)+\cos(y+\nu)},\right.\\
\left.\frac{\sinh(x-\mu)}{\cosh(x-\mu)-\cos(y-\nu)}-\frac{\sinh(x-\mu)}{\cosh(x-\mu)+\cos(y+\nu)}\right).
\end{multline*}
Using the formula for the vorticity we have that the velocity is
\begin{multline*}
v(x,y)=\int_\RR\left(\frac{-(4\pi)^{-1}\varpi(\beta)\sin(y-z_2(\beta))}{\cosh(x-z_1(\beta))-\cos(y-z_2(\beta))}+\frac{-(4\pi)^{-1}\varpi(\beta)\sin(y+z_2(\beta))}{\cosh(x-z_1(\beta))+\cos(y+z_2(\beta))},\right.\\
\left.\frac{(4\pi)^{-1}\varpi(\beta)\sinh(x-z_1(\beta))}{\cosh(x-z_1(\beta))-\cos(y-z_2(\beta))}-\frac{(4\pi)^{-1}\varpi(\beta)\sinh(x-z_1(\beta))}{\cosh(x-z_1(\beta))+\cos(y+z_2(\beta))}\right)d \beta.
\end{multline*}
We use the identity
$$
\int_\RR \partial_\eta \log(\cosh(z_1(\alpha)-z_1(\eta))\pm\cos(z_2(\alpha)\pm z_2(\eta)))=0
$$
to obtain that the average velocity in the curve is
\begin{multline*}
v(z(\alpha))=\left(-\frac{\bar{\rho}}{2}\text{P.V.}\int_\RR\partial_\alpha z_1(\eta)\bigg{[}\frac{\sinh(z_1(\alpha)-z_1(\eta))}{\cosh(z_1(\alpha)-z_1(\eta))-\cos(z_2(\alpha)-z_2(\eta))}\right.\\
\qquad+\frac{\sinh(z_1(\alpha)-z_1(\eta))}{\cosh(z_1(\alpha)-z_1(\eta))+\cos(z_2(\alpha)+z_2(\eta))}\bigg{]}d\eta,\\
-\frac{\bar{\rho}}{2}\text{P.V.}\int_\RR\partial_\alpha z_2(\eta)\bigg{[}\frac{\sinh(z_1(\alpha)-z_1(\eta))}{\cosh(z_1(\alpha)-z_1(\eta))-\cos(z_2(\alpha)-z_2(\eta))}\\
\left.\qquad-\frac{\sinh(z_1(\alpha)-z_1(\eta))}{\cosh(z_1(\alpha)-z_1(\eta))+\cos(z_2(\alpha)+z_2(\eta))}\bigg{]}d\eta\right).
\end{multline*}

The interface is convected by this velocity but we can add any velocity in the tangential direction without altering the shape of the curve. The tangential velocity in a curve only changes the parametrization. We consider then the following equation with the redefined velocity
$$
\pat z(\alpha)=v(z(\alpha))+c(\alpha)\partial_\alpha z(\alpha),
$$
where
\begin{multline*}
c(\alpha)=\frac{\bar{\rho}}{2}\text{P.V.}\int_\RR\frac{\sinh(z_1(\alpha)-z_1(\eta))}{\cosh(z_1(\alpha)-z_1(\eta))-\cos(z_2(\alpha)-z_2(\eta))}\\
+\frac{\sinh(z_1(\alpha)-z_1(\eta))}{\cosh(z_1(\alpha)-z_1(\eta))+\cos(z_2(\alpha)+z_2(\eta))}d\eta.
\end{multline*}
Following this approach we obtain \eqref{eqcurva}. Because of that choice of $c(\alpha)$ we obtain that, if initially the curve can be parametrized as a graph, \emph{i.e.}, $z(x,0)=(x,f_0(x))$, we have that the velocity $v_1$ on the curve is zero, thus our curve is parametrized as a graph for $t>0$ and we recover the contour equation \eqref{eq0.1}.

Note that when $l\rightarrow\infty$ in the equation \eqref{eq0.1} we recover the equation for the whole plane \eqref{full}: 
\begin{equation}\label{limiota}
\lim\limits_{l\to\infty}\frac{1}{l} A[f](x)=\frac{2}{\pi}\text{P.V.}\int_\RR \frac{\eta(\pax f(x)-\pax f(x-\eta))}{\eta^2+(f(x)-f(x-\eta))^2}d\eta,
\end{equation}
where $A[f]$ is the operator defined in \eqref{eq0.1}.

Furthermore, we obtain the pressure $p$ (up to a constant) solving the equation
$$
-\Delta p=g\partial_y\rho,
$$
with Neumann boundary conditions $$\partial_n p|_{y=l}=-g\rho^1,\qquad\partial_n p|_{y=-l}=g\rho^2.$$

In this way we obtain $v,p$ satisfying Darcy's Law and the incompressibility condition. It is easy to check that $\rho(x,y,t)$ is a weak solution of the conservation of mass equation.

\begin{defi}
Let $v$ be an incompressible field of velocities following Darcy's Law. We define the weak solution of the conservation of mass equation present in \eqref{eq4} as a function satisfying
\begin{equation*}
\int_0^T\int_{\RR}\int_{-l}^{l}\rho(x,y,t)\partial_t \phi(x,y,t)+v(x,y,t)\rho(x,y,t)\nabla_{x,y}\phi (x,y,t)dydxdt=0  
\end{equation*}
for all $\phi\in C^{\infty}_c(\RR\times(-l,l)\times(0,T))$.
\label{def1}
\end{defi}

We conclude this section with the following result.

\begin{prop}\label{consermass}
Let $\rho$ be the function defined in \eqref{eq3a}. Then $\rho$ is a weak solution of the conservation of mass equation (see Definition \ref{def1}) if and only if $f$ is a solution of \eqref{eq0.1}. 
\end{prop}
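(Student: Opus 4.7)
\textbf{Plan of proof for Proposition \ref{consermass}.}

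The plan is to unfold the weak formulation of Definition \ref{def1} by splitting the spatial integral into contributions from $S^1(t)$ and $S^2(t)$, turning each piece into a boundary integral on the interface $\Gamma(t)=\{(x,f(x,t))\}$, and matching those boundary contributions against \eqref{eq5}, which by the derivation in Section \ref{sec1} is equivalent to \eqref{eq0.1}. Concretely, I would fix $\phi\in C^\infty_c(\RR\times(-l,l)\times(0,T))$ and write
\begin{equation*}
\int_0^T\!\!\int_\RR\!\!\int_{-l}^l \rho(\partial_t\phi+v\cdot\nabla\phi)\,dy\,dx\,dt
=\sum_{i=1,2}\rho^i\int_0^T\!\!\int_{S^i(t)}(\partial_t\phi+v\cdot\nabla\phi)\,dA\,dt.
\end{equation*}

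For the time term, since $S^1(t)=\{y>f(x,t)\}$ is transported by a boundary velocity whose normal component is $\partial_t f/\sqrt{1+(\partial_x f)^2}$, the Reynolds transport theorem together with the compact support of $\phi$ in $t$ gives
\begin{equation*}
\int_0^T\!\!\int_{S^1(t)}\partial_t\phi\,dA\,dt=\int_0^T\!\!\int_\RR\phi(x,f(x,t),t)\,\partial_t f(x,t)\,dx\,dt,
\end{equation*}
with the opposite sign for $S^2(t)$. For the spatial term, the key observation is that $\rho^i$ is constant in $S^i(t)$ and $\nabla\cdot v=0$, so $\rho^i v\cdot\nabla\phi=\nabla\cdot(\rho^i v\phi)$ and the divergence theorem can be applied. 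The outward unit normals on $\Gamma(t)$ are $n_1=(\pax f,-1)/\sqrt{1+(\pax f)^2}$ for $S^1$ and $n_2=-n_1$ for $S^2$; the impermeable boundary condition $v\cdot n=0$ at $y=\pm l$ eliminates the wall contributions. After collecting signs one obtains
\begin{equation*}
\sum_i\int_0^T\!\!\int_{S^i(t)}\rho^i v\cdot\nabla\phi\,dA\,dt
=(\rho^2-\rho^1)\int_0^T\!\!\int_\RR\phi(x,f,t)\,(-\pax f,1)\cdot v(x,f,t)\,dx\,dt.
\end{equation*}

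Adding the two contributions, the weak formulation reduces to
\begin{equation*}
(\rho^2-\rho^1)\int_0^T\!\!\int_\RR\phi(x,f(x,t),t)\Bigl[(-\pax f,1)\cdot v(x,f(x,t),t)-\pat f(x,t)\Bigr]\,dx\,dt=0.
\end{equation*}
Since $\phi|_{\Gamma(t)}$ is arbitrary in $C^\infty_c$, this vanishes for every admissible test function if and only if the bracket is zero pointwise, which is precisely the kinematic condition \eqref{eq5}. Invoking the explicit Biot--Savart expression for $v$ on the interface derived in Section \ref{sec1}, \eqref{eq5} becomes \eqref{eq0.1}; conversely, \eqref{eq0.1} implies \eqref{eq5} and hence the identity above, closing the equivalence.

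The main technical nuisance I expect is bookkeeping: correctly orienting $n_1$ and $n_2$, justifying the Reynolds transport step for the merely Lipschitz (or $H^s$) graph $f$, and checking that the trace of $v$ on $\Gamma(t)$ used in the boundary integral coincides with the principal-value expressions appearing in \eqref{eq0.1} (one has to invoke the continuity of the normal component of $v$ across the interface, as noted after \eqref{eq7}). Once orientations and traces are handled carefully, the remaining steps are routine manipulations with the divergence theorem and a standard fundamental-lemma-of-calculus-of-variations argument on $\Gamma(t)$.
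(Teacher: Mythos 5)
The paper itself does not write out this proof---it explicitly leaves it ``for the interested reader''---so there is no proof in the paper to compare against. Your argument is correct and is precisely the natural route: split over $S^1(t)$ and $S^2(t)$, use Reynolds transport for the $\partial_t\phi$ integral and the divergence theorem (with $\nabla\cdot v=0$ and $\rho^i$ constant) for the $v\cdot\nabla\phi$ integral, note that the $y=\pm l$ contributions vanish because $\phi$ is compactly supported in $(-l,l)$ (and, equivalently, by the impermeability condition), reduce to a single interface integral weighted by $(\rho^2-\rho^1)$, and invoke arbitrariness of $\phi$ on $\Gamma(t)$ to recover \eqref{eq5}, which the Biot--Savart computation of Section~\ref{sec1} identifies with \eqref{eq0.1}. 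The sign/orientation bookkeeping you flag is consistent: with $S^1=\{y>f\}$ and outward normal $n_1=(\pax f,-1)/\sqrt{1+(\pax f)^2}$, the outward normal boundary velocity is $-\pat f/\sqrt{1+(\pax f)^2}$, and after $ds=\sqrt{1+(\pax f)^2}\,dx$ you get exactly the displayed identity for the time term, with the opposite sign for $S^2$; the two caveats you raise (regularity to justify transport, and that only the normal trace of $v$ is continuous across $\Gamma$) are exactly the right ones to address, and both are benign for $H^3$ graph solutions.
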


The proofs of these two results are straightforward and, for the sake of brevity, we left them for the interested reader.


\section{Similar results between the two regimes}\label{sec2}
In this section we show a group of results for $0<\mathcal{A}<1$ similar to those in the regime $\mathcal{A}=0$. Some proofs follow the same ideas but, due to the structure in our equation \eqref{eq0.1}, with a second term coming from the boundaries present in our model, there are some difficulties. We show the well-posedness in Sobolev spaces when the Rayleigh-Taylor condition is satified, \emph{i.e.} the denser fluid is below the lighter one. In the case where the Rayleigh-Taylor condition is not satisfied but our initial data is analytic we also have a well-posedness result by means of a Cauchy-Kovalevski Theorem. We prove the smoothing effect of the spatial operator in \eqref{eq0.1}, \emph{i.e.} the solution becomes instantly analytic. We also apply this smoothing effect to prove an ill-posedness result when the system is in the unstable regime.

\subsection{Well-posedness in Sobolev spaces}\label{sec2.1}
In this section we sketch the proof of local well-posedness in Sobolev spaces in the Rayleigh-Taylor stable case:

\begin{thm}
If the Rayleigh-Taylor condition is satisfied, \emph{i.e.} $\rho^2-\rho^1>0$, and the initial data $f_0(x)=f(x,0)\in H_l^k(\RR)$, $k\geq3$, then there exists an unique classical solution of \eqref{eq0.1} $f\in C([0,T],H_l^k(\RR))$  where $T=T(\|f_0\|_{H^k}, \|f_0\|_{L^\infty})$. Moreover, we have $f\in C^1([0,T],C(\RR))\cap C([0,T],C^2(\RR)).$
\label{teo1}
\end{thm}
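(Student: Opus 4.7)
The plan is to follow the standard energy-method strategy for quasilinear nonlocal parabolic equations, as developed for the deep-water Muskat case in \cite{c-g07} and \cite{ccgs-10}, adapted to accommodate the additional boundary-induced kernel $\Xi_2$. First, I would regularize the equation by introducing a mollification $\jeps$ and considering
\begin{equation*}
\pat f^\epsilon = \jeps A[\jeps f^\epsilon], \qquad f^\epsilon(\cdot,0)=\jeps f_0,
\end{equation*}
so that the right-hand side maps $H^k_l$ boundedly into $H^k$; Picard's theorem in the open set $\{f\in H^k:\|f\|_{L^\infty}<l-\delta\}$ yields an $\epsilon$-dependent classical solution on a short time interval. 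The essential task is then a uniform-in-$\epsilon$ a priori estimate giving a common interval of existence depending only on $\|f_0\|_{H^k}$ and $l-\|f_0\|_{L^\infty}$.

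For the energy estimate, I would compute $\frac{d}{dt}\|\pax^k f\|_{L^2}^2$. The principal contribution comes from $\Xi_1$: isolating the highest-order piece in which $\pax^k$ falls entirely on the factor $\pax f(x)-\pax f(x-\eta)$ and symmetrizing in $\eta$, one recovers, modulo commutators, a negative-definite dissipation of the form
\begin{equation*}
-\frac{\rho^2-\rho^1}{8l}\int_\RR\int_\RR \frac{\sinh(\eta)\,\bigl(\pax^{k+1}f(x)-\pax^{k+1}f(x-\eta)\bigr)\pax^{k}f(x)}{\cosh(\eta)-\cos(f(x)-f(x-\eta))}\,d\eta\,dx
\end{equation*}
(using the convention $2l=\pi$), which after a second symmetrization controls an $\dot H^{k+1/2}$-type seminorm of $f$. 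The Rayleigh-Taylor sign $\rho^2-\rho^1>0$ is precisely what makes this term coercive. Commutators between $\pax^k$ and the nonlinear denominator are handled by Kato-Ponce-type inequalities, giving a bound of the form $\mathcal{P}\bigl(\|f\|_{H^k},(l-\|f\|_{L^\infty})^{-1}\bigr)\|f\|_{H^k}^2$. The second kernel $\Xi_2$, by contrast, has a denominator bounded below by a positive constant depending only on $l-\|f\|_{L^\infty}$, so $\Xi_2$ defines a regular (non-singular) operator that loses no derivatives; its contribution to $\pat\pax^k f$ is a bounded lower-order term absorbed by the $\Xi_1$ dissipation. Combined, they yield
\begin{equation*}
\frac{d}{dt}\|f^\epsilon\|_{H^k}^2 + c(\rho^2-\rho^1)\|\Lambda^{1/2}\pax^k f^\epsilon\|_{L^2}^2 \;\leq\; \mathcal{P}\!\left(\|f^\epsilon\|_{H^k},\,(l-\|f^\epsilon\|_{L^\infty})^{-1}\right),
\end{equation*}
uniformly in $\epsilon$. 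Since $\|f^\epsilon\|_{L^\infty}\lesssim\|f^\epsilon\|_{H^k}$ by Sobolev embedding, Grönwall's inequality closes the estimate on a common interval $[0,T]$ with $T=T(\|f_0\|_{H^k},l-\|f_0\|_{L^\infty})$.

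From here, Aubin-Lions compactness produces a limit $f\in C([0,T];H^{k-1}_l)\cap L^\infty([0,T];H^k_l)$ solving \eqref{eq0.1}, and the Bona-Smith argument upgrades continuity to $C([0,T];H^k_l)$. The additional regularity $f\in C^1([0,T],C(\RR))\cap C([0,T],C^2(\RR))$ then follows from the equation together with the embedding $H^{k-1}\hookrightarrow C^2$ for $k\geq 3$. Uniqueness is obtained via an $L^2$ energy estimate for the difference of two solutions, again exploiting the splitting of the nonlinearity into a dissipative $\Xi_1$-part and a bounded $\Xi_2$-perturbation. The main obstacle I foresee is the commutator estimate for $\Xi_1$: one must carefully isolate the top-order dissipative contribution from commutators arising when $\pax^k$ hits the nonlinear denominator $\cosh(\eta)-\cos(f(x)-f(x-\eta))$, and show that these are subordinate to the dissipation. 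A secondary technical point is the uniform tracking of the factor $(l-\|f\|_{L^\infty})^{-1}$, which quantifies the degeneration of the estimates as the interface approaches the confining boundary and replaces the role played by $\|f\|_{L^\infty}$ alone in the deep-water analysis.
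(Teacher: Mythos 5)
Your overall strategy matches the paper's: mollify, prove uniform a priori estimates, pass to the limit (citing \cite{c-g07} for the mechanism of the $\Xi_1$ estimates), and treat $\Xi_2$ as a bounded, non-singular perturbation whose denominator is controlled by the distance of $f$ to the boundary. That structural split and the role of Rayleigh--Taylor are correctly identified.

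There is, however, a genuine gap in how you close the estimate. You propose to control $(l-\|f\|_{L^\infty})^{-1}$ by Sobolev embedding $\|f\|_{L^\infty}\lesssim\|f\|_{H^k}$, and then apply Gr\"onwall. This does not work: the Sobolev bound gives an \emph{upper} bound $\|f\|_{L^\infty}\le C\|f\|_{H^k}$, but this yields no lower bound on $l-\|f\|_{L^\infty}$, since $C\|f_0\|_{H^k}$ can easily exceed $l$ even when the initial interface is far from the boundary. The energy differential inequality $\frac{d}{dt}\|f\|_{H^k}^2\leq \mathcal{P}(\|f\|_{H^k},(l-\|f\|_{L^\infty})^{-1})$ is therefore not a closed system. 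The quantity that must be propagated in time is precisely the strict confinement $\|f(t)\|_{L^\infty}<l$, and you relegate this to a ``secondary technical point'' without explaining how to do it. The paper's central device is to promote this to a first-order concern: it augments the energy to
\begin{equation*}
E[f](t)=\|f\|^2_{H^3}(t)+\|d[f]\|_{L^\infty}(t),\qquad d[f](x,\eta)=\frac{1}{\cosh(\eta)+\cos(f(x)+f(x-\eta))},
\end{equation*}
so that finiteness of $E$ \emph{is} the confinement condition. It then proves a separate pointwise bound $\|\pat f\|_{L^\infty}\leq c(l)\|f\|_{L^\infty}(1+\|\pax f\|_{L^\infty})$ via an explicit in/out decomposition of the integral against $\Xi_1$ (with a nontrivial integration by parts to tame the P.V.), and feeds this into an ODE for $\|d[f]\|_{L^\infty}$ using Rademacher's theorem. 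That is what yields $\frac{d}{dt}E[f]\leq c(l)(E[f]+1)^5$ and the existence time $T(\|f_0\|_{H^k},\|f_0\|_{L^\infty})$. Your proposal needs to incorporate an equivalent mechanism (either the augmented energy $d[f]$ or a bootstrap using an $L^\infty$ bound on $\pat f$) before the argument actually closes.
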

\begin{proof}
We indicate the constants with a dependency on $l$ as $c(l)$. The proof follows the same lines as in \cite{c-g07}, \emph{i.e.} we obtain \emph{a priori} bounds for the appropriate energy which allow us to regularize the system and to take the limit of the regularized solutions. In order to deal with the kernel $\Xi_2$, the kernel corresponding to the effect of the boundaries, we define the following energy:
\begin{equation}
E[f](t)=\|f\|^2_{H^3}(t)+\|d[f]\|_{L^\infty}(t),
\label{energy} 
\end{equation}
where $d[f]:\RR^2\times \RR^+\mapsto \RR^+$ is defined as
\begin{equation}
d[f](x,\eta,t)=\frac{1}{\cosh(\eta)+\cos(f(x)+f(x-\eta))}. 
\label{distancia}
\end{equation}
The function \eqref{distancia} measures the distance between $f$ and the top and floor $\pm l$. In other words, $\|d[f]\|_{L^\infty}<\infty$ implies that $\|f\|_{L^\infty}<\frac{\pi}{2}$. So this is the natural \emph{'energy'} associated to the space $H^3_{l}(\RR)$. We obtain \emph{'a priori'} energy estimates as in \cite{c-g07}. The integrals corresponding to the kernel $\Xi_1$ are the more singular terms and can be bounded as in \cite{c-g07} because has a singularity with the same order. Indeed, we compute
\begin{multline*}
\Xi_1=\frac{\sinh(\eta)-\eta}{\cosh(\eta)-\cos(f(x)-f(x-\eta))}\\
+\frac{\eta}{\cosh(\eta)-\cos(f(x)-f(x-\eta))}-\frac{2\eta}{\eta^2\left(1+\left(\frac{f(x)-f(x-\eta)}{\eta}\right)^2\right)}\\+\frac{2\eta}{\eta^2+\left(f(x)-f(x-\eta)\right)^2}.
\end{multline*}
The last term in this expression is (up to a constant) the kernel obtained when the fluids fill the whole plane and the other terms are not singular.

The integrals corresponding to the kernel $\Xi_2$ are harmless and can be bounded using the definition of $d[f]$. For instance, one of the integrals arising in the study of the third derivative, after an integration by parts, is
$$
I=\int_{\RR}|\pax^3f(x)|^2\left(\int_{B(0,1)}+\int_{B^c(0,1)}\right)\pax\Xi_2(x,\eta)d\eta dx=I_{in}+I_{out},
$$
and we obtain
\begin{eqnarray*}\nonumber
I_{in}&\leq&\int_{\RR}|\pax^3f(x)|^2\text{P.V.}\int_{B(0,1)}\sinh\left( |\eta|\right)\bigg{|}\frac{\sin\left( f(x)+f(x-\eta)\right) (\pax f(x)+\pax f(x-\eta))}{\left(\cosh \left( \eta\right)+\cos( (f(x)+f(x-\eta)))\right)^2}\bigg{|}d\eta dx\\ 
&\leq&c(l)\|f\|_{C^1}\|\pax^3f\|_{L^2}^2\|d[f]\|^2_{L^\infty}
\leq c(l)\|f\|_{H^3}^3\|d[f]\|^2_{L^\infty}, 
\end{eqnarray*}
\begin{equation*}
I_{out}\leq c(l)\|f\|_{C^1}\|f\|_{H^3}^2\int_{B^c(0,1)}\frac{\sinh\left( |\eta|\right)}{(\cosh\left( \eta\right)-1)^2}d\eta\leq c(l)\|f\|_{H^3}^3.
\end{equation*}
With these techniques we obtain
$$
\frac{d}{dt}\|f\|_{H^3}\leq c(l)(E[f]+1)^5.
$$

In order to use classical energy methods we have to bound the evolution of $\|d[f]\|_{L^\infty}$ in terms of the energy $E[f]$. With this method we need a bound on $\|\pat f\|_{L^\infty}$. In order to do this we split $\pat f$ in two terms, one for each kernel
$$
\pat f=A_1+A_2.
$$
We give the proof for the 
$$
A_1=\text{P.V.}\int_\RR(\pax f(x)-\pax f(x-\eta))\Xi_1(x,\eta)d\eta.
$$ 
For the term corresponding to the second kernel, $A_2$, the procedure is analogous.

We split $A_1$ in its \emph{'in'} and \emph{'out'} parts, $A_1=A^{in}_1+A^{out}_1$, with
$$
A^{in}_1\leq c(l)\|\pax^2f\|_{L^\infty},
$$ 
and
$$
A^{out}_1\leq\left\|\text{P.V.}\int_{B^c(0,1)}\pax f(x)\Xi_1(x,\eta)d\eta\right\|_{L^\infty}+\left\|\text{P.V.}\int_{B^c(0,1)}-\pax f(x-\eta)\Xi_1(x,\eta)d\eta\right\|_{L^\infty}.
$$
We have that the integral
$$
\text{P.V.}\int_{B^c(0,1)}\frac{\sinh(\eta)}{\sinh^2\left(\frac{\eta}{2}\right)}d\eta=0,
$$ 
using this fact and the classical and hyperbolic trigonometric identities we can write
$$
\text{P.V.}\int_{B^c(0,1)}\Xi_1(x,\eta)d\eta=\text{P.V.}\int_{B^c(0,1)}\frac{\sinh(\eta)}{2\sinh^2\left(\frac{\eta}{2}\right)}\cdot\left(\frac{1}{1+\frac{\sin^2((f(x)-f(x-\eta))/2)}{\sinh^2(\eta/2)}}-1\right)d\eta.
$$
We compute
$$
\text{P.V.}\int_{B^c(0,1)}\Xi_1(x,\eta)d\eta=\text{P.V.}\int_{B^c(0,1)}\frac{\sinh(\eta)}{2\sinh^2\left(\frac{\eta}{2}\right)}\cdot\left(\frac{\frac{-\sin^2((f(x)-f(x-\eta))/2)}{\sinh^2(\eta/2)}}{1+\frac{\sin^2((f(x)-f(x-\eta))/2)}{\sinh^2(\eta/2)}}\right)d\eta,
$$
and we obtain
$$
\left\|\text{P.V.}\int_{B^c(0,1)}\pax f(x)\Xi_1(x,\eta)d\eta\right\|_{L^\infty}\leq c(l)\|\pax f\|_{L^\infty}.
$$
We note that
$$
\text{P.V.}\int_{B^c(0,1)}-\pax f(x-\eta)\Xi_1(x,\eta)d\eta=\text{P.V.}\int_{B^c(0,1)}\partial_\eta f(x-\eta)\Xi_1(x,\eta)d\eta.
$$
In order to bound this integral we integrate by parts. We conclude
$$
\left\|\text{P.V.}\int_{B^c(0,1)}-\pax f(x-\eta)\Xi_1(x,\eta)d\eta\right\|_{L^\infty}\leq c(l)\|f\|_{L^\infty}(1+\|\pax f\|_{L^\infty}).
$$
Thus we get
$$
\|\pat f\|_{L^\infty}\leq c(l)\|f\|_{L^\infty}(1+\|\pax f\|_{L^\infty}).
$$
Now, we can prove the last estimate. We have that 
$$
\frac{d}{dt}d[f]=d[f]^2\sin(f(x)+f(x-\eta))(\pat f(x)+\pat f(x-\eta))\leq c(l)d[f]^2\|\pat f\|_{L^\infty}.
$$ 
Due to the previous bound for $\|\pat f\|_{L^\infty}$, we obtain
$$
\frac{d}{dt}d[f]\leq c(l)d[f] \|d[f]\|_{L^\infty}(E[f]+1)^2.
$$ 
Integrating in time, we get
$$
d[f](t+h)\leq d[f](t)e^{\int_t^{t+h}c(l)\|d[f]\|_{L^\infty}(E[f](s)+1)^2ds}.
$$
Finally we have 
$$
\frac{d}{dt}\|d[f]\|_{L^\infty}=\lim_{h\rightarrow0}\frac{\|d[f]\|_{L^\infty}(t+h)-\|d[f]\|_{L^\infty}(t)}{h}\leq c(l)(E[f]+1)^4.
$$
Putting all together we obtain the following bound
\begin{equation*}
\frac{d}{dt}E[f]\leq c(l)(E[f]+1)^{5},
\end{equation*}
therefore
$$
E[f](t)\leq \frac{E[f_0]}{\sqrt[4]{-4E[f_0]^4c(l)t+1}}.
$$
Now we regularize equation \eqref{eq0.1} in the classical way using mollifiers (see \cite{bertozzi-Majda}) and these regularized equations have an unique classical solution. The estimates for the regularized equations mimic the previous ones above, thus with these \emph{'a priori'} bounds we can obtain the local existence by taking the limit solutions of the regularized equations. The proof of uniqueness of classical solutions follows the same ideas.
\end{proof}

\subsection{Smoothing effect}\label{sec2.2}
In this section we sketch the proof of the instant analyticity for the classical solution (which exists due to the result in the previous Section).

\begin{thm}\label{IA}
Let $f_0\in H^3_l(\RR)$ be the initial data and the Rayleigh-Taylor condition is satisfied then the unique classical solution $f(x,t)$ to equation \eqref{eq0.1} continues analitically into the strip $\BB=\{x+i\xi,|\xi|<kt, \forall \,0<t\leq T(f_0)\}$ with $k=k(f_0).$
\end{thm}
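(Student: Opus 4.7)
The plan is to adapt the complex-strip energy method of \cite{ccfgl} (originally for the deep-water equation \eqref{full}) and to verify that the boundary kernel $\Xi_2$ does not break the estimates. The underlying mechanism is parabolic: under the Rayleigh-Taylor condition, $\Xi_1$ contributes at highest order the operator $-2\Lambda$, whose half-derivative of smoothing is precisely what is converted into a linear-in-time growth of the analyticity strip around the real axis.

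I would work with the regularized equations from the proof of Theorem \ref{teo1} and complexify the unknown. Fix a parameter $k>0$ to be chosen later and set $F_\pm(x,t)=f(x\pm ikt,t)$, with the complexified energy
\begin{equation*}
E_k(t)=\|F_+(\cdot,t)\|_{H^3}^2+\|F_-(\cdot,t)\|_{H^3}^2+\|d[F_+]\|_{L^\infty}+\|d[F_-]\|_{L^\infty},
\end{equation*}
where $d[\cdot]$ is as in \eqref{distancia}, now interpreted as an upper bound on the modulus of the complex denominator. Since $\partial_t F_\pm=(\pat f)(x\pm ikt,t)\pm ik(\pax f)(x\pm ikt,t)$, the derivative of $\|F_\pm\|_{H^3}^2$ splits into the $\pat f$ contribution, which by replaying the computation of Theorem \ref{teo1} on complex arguments gives a dissipation $-c_0\|F_\pm\|_{\dot H^{7/2}}^2$ (from the top-order part of $\Xi_1$) plus polynomial terms $P(E_k+1)$ (from $\Xi_2$ and from the lower-order part of $\Xi_1$); and a moving-strip correction $\pm 2ik\langle\pax^4 F_\pm,\pax^3 F_\pm\rangle$, which by Plancherel is bounded in modulus by $C_0 k\|F_\pm\|_{\dot H^{7/2}}^2$. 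Choosing $k<c_0/C_0$ the strip-growth term is absorbed into the dissipation and one obtains a closed differential inequality
\begin{equation*}
\frac{d}{dt}E_k(t)\leq P(E_k(t)+1).
\end{equation*}
The evolution of $\|d[F_\pm]\|_{L^\infty}$ is controlled by repeating the $\|\pat F_\pm\|_{L^\infty}$ bound from the end of the proof of Theorem \ref{teo1} on complex arguments. Integrating on a short time interval $[0,T_0]$ and passing to the limit in the mollifier parameter produces the analytic extension of the classical solution to the strip $\BB=\{x+i\xi:|\xi|<kt,\,0<t\leq T_0\}$.

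The main obstacle will be verifying the dissipative structure of $\Xi_1$ after the complex shift: one needs to show that the symbol of the leading-order linearization remains $-2|\xi|$ up to an $O(kt)$ perturbation, so that the dissipation constant $c_0$ survives for narrow strips. A secondary difficulty is that, for complex $x$, the denominator $\cosh(\frac{\pi}{2l}\eta)+\cos(\frac{\pi}{2l}(F_\pm(x)+F_\pm(x-\eta)))$ appearing in $\Xi_2$ becomes complex-valued, so the propagation of $\|d[F_\pm]\|_{L^\infty}$ must keep it bounded away from zero. Both issues are handled by requiring $kt$ to be sufficiently small, which ultimately forces $k$ to depend on $f_0$ through $E_k(0)$, the Rayleigh-Taylor gap $\rho^2-\rho^1$, and $\|f_0\|_{L^\infty}$.
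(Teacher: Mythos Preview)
Your strategy matches the paper's: complexify onto the boundary of a strip growing linearly in time, extract the $-\Lambda$ dissipation from $\Xi_1$ at top order, absorb the $O(k)$ strip-growth correction into it, and close a differential inequality. The paper follows \cite{ccfgl} for this scheme and only verifies that the boundary kernel $\Xi_2$ fits in.

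There is, however, a concrete gap in your energy. You carry only $d[\cdot]$ from \eqref{distancia}, which controls the complexified $\Xi_2$ denominator $\cosh\eta+\cos(F_\pm(x)+F_\pm(x-\eta))$. But after the complex shift the $\Xi_1$ denominator $\cosh\eta-\cos(F_\pm(x)-F_\pm(x-\eta))$ is \emph{also} no longer automatically nonvanishing away from $\eta=0$: for complex $F_\pm$ the cosine can exceed $1$ in modulus, and nothing in your $E_k$ prevents this. The paper therefore includes a complex arc-chord quantity
\[
d^-[f](x+i\xi,\eta)=\frac{\sinh^2(\eta/3)}{\cosh\eta-\cos\bigl(f(x+i\xi)-f(x+i\xi-\eta)\bigr)}
\]
in the energy (together with an auxiliary $D[f]$ guaranteeing decay for large $|\eta|$). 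Without propagating $\|d^-[f]\|_{L^\infty}$, your replay of the Theorem~\ref{teo1} computation on complex arguments is not justified: the integrals defining $\Xi_1$ may diverge. A second, more minor point: your dissipation constant $c_0$ is not a fixed number but effectively $2\pi\min_\gamma\mathrm{Re}\,(1+(\partial_x f(\gamma))^2)^{-1}$, which is time-dependent and can degrade as the solution evolves on the strip boundary. The paper closes this loop by adjoining the reciprocal of the margin $2\pi m(t)-K\|\mathrm{Im}\int(\Xi_1+\Xi_2)\|_{L^\infty(\BB)}-4k$ to the energy (see \eqref{energy2IA}); choosing $k$ once from the initial data is not by itself enough to keep the sign for positive time.
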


\begin{proof}
The proof, as the one in \cite{ccfgl}, relies on some \emph{a priori} estimates for the complex extension of the function $f$ on the boundary of the strip 
$$
\BB=\{x+i\xi,|\xi|<kt, \forall \,\,0<t\leq T(f_0)\}
$$ 
for certain $k$, a constant that will be fixed later. Once the evolution of the appropriate energy is bounded, we construct regularized equations with analytical solutions and such that the same estimates hold. Therefore we can pass to the limit in the regularized solutions to obtain a solution of the original problem. We denote
$$
\|f\|^2_{L^2(\BB)}=\int_\RR |f(x+ikt)|^2dx+\int_\RR |f(x-ikt)|^2dx,\;\;\|f\|^2_{H^3(\BB)}=\|f\|^2_{L^2(\BB)}+\|\pax^3 f\|^2_{L^2(\BB)},
$$
\begin{equation*}
d^+[f](x+i\xi,\eta)=\frac{\cosh^2(\eta/3)}{\cosh(\eta)+\cos(f(x+i\xi)+f(x+i\xi-\eta))},
\end{equation*}
\begin{equation*}
d^-[f](x+i\xi,\eta)=\frac{\sinh^2(\eta/3)}{\cosh(\eta)-\cos(f(x+i\xi)-f(x+i\xi-\eta))},
\end{equation*}
and
\begin{equation}\label{m}
m(t)=\min_\gamma\re\frac{1}{1+(\pax f(\gamma))^2}
\end{equation}
We remark that $d^+[f]$, as $d[f]$ defined in \eqref{distancia}, controls the distance to the boundaries. $d^-[f]$ plays the role of the arc-chord condition (see \cite{ccfgl}) and ensures that the singularity in the first kernel has order two. In order to get energy estimates working with complex functions, we need to study when the kernels $\Xi_1$ and $\Xi_2$ are singular. If $\Xi_1$ is singular then the following equality holds
$$
\cosh(\eta)-\cos(\re [f(x+ikt)- f(x+ikt-\eta)])\cosh(\im [f(x+ikt)- f(x+ikt-\eta)])=0.
$$ 
Assume now that $|\eta|>R\geq2l$, where $R>>2l$ is a fixed constant, then
\begin{multline*}
\frac{1}{2}\cosh(\eta)+\frac{1}{2}\cosh(R)-\cosh(2\| f\|_{L^\infty(\BB)})\\
\leq \frac{1}{2}\cosh(\eta)+\frac{1}{2}\cosh(R)-\cosh(2\|\im f\|_{L^\infty(\BB)})\\
\leq\cosh(\eta)-\cos(\re [f(x+ikt)-f(x+ikt-\eta)])\cosh(\im [f(x+ikt)- f(x+ikt-\eta)]).
\end{multline*}
Then, taking $R>>2l$ such that
$$
\frac{1}{2}\cosh(R)-\cosh(2\|f\|_{L^\infty(\BB)})\geq0,
$$
we ensure that the kernel $\Xi_1$ is not singular in this region. A similar analysis can be done for $\Xi_2$ to obtain the same condition. 

We denote
$$
D[f](\gamma)=\frac{1}{\cosh(R)-2\cosh(2|f(\gamma)|)}.
$$
This term is a technical resource to obtain enough decay at infinity. We consider Hardy-Sobolev spaces (see \cite{bakan2007hardy} and references therein) on $\BB$ so we want to obtain \emph{'a priori'} bounds on the following energy
\begin{equation}\label{enIA}
E_\BB[f]=\|f\|^2_{H^3(\BB)}+\|d^+[f]\|_{L^\infty(\BB))}+\|d^-[f]\|_{L^\infty(\BB)}+\|D[f]\|_{L^\infty(\BB)}, 
\end{equation}
where
$$
\|F(x+i\xi,\eta)\|_{L^\infty(\BB)}=\sup_{x+i\xi\in \BB,\\ \eta\in \RR} |F(x+i\xi,\eta)|.
$$
The evolution for the complex extension of $f$ is
\begin{multline}
\pat f(x\pm ikt) =\text{P.V.}\int_\RR\bigg{[}\frac{\left (\partial_xf\left (x\pm ikt\right )-\partial_xf\left (x\pm ikt-\eta\right )\right)\sinh\left(\eta\right)}{\cosh \left(\eta\right)-\cos((f(x\pm ikt)-f(x\pm ikt-\eta)))}\\
+ \frac{(\partial_xf\left (x\pm ikt\right )+\partial_xf\left (x\pm ikt-\eta\right )\sinh\left(\eta\right)}{\cosh \left(\eta\right)+\cos((f(x\pm ikt)+f(x\pm ikt-\eta)))}\bigg{]}d\eta.
\label{eq0.1IA}
\end{multline}
Recall that $R>>2l$ is a fixed constant. Then, as in \cite{c-g07, ccfgl}, we obtain
\begin{multline}\label{tojunto}
\frac{d}{dt}E_{\BB}[f]\leq \exp(c(l)(E_\BB[f]+1))\\
+\|\Lambda^{1/2}\pax^3 f\|^2_{L^2(\BB)}\left(K\left\|\im\left(\text{P.V.}\int_\RR \Xi_1+\Xi_2\right)\right\|_{L^\infty(\BB)}+2k-2\pi m(t)\right),
\end{multline}
where $K=K(l)$ is an universal constant and $k$ is the width of the strip. At time $t=0$, we have that if $k$ is small enough ($k$ only depends on the initial data) 
\begin{multline}\label{kcond}
4k+K\left\|\im\left(\text{P.V.}\int_\RR \Xi_1+\Xi_2\right)\bigg{|}_{t=0}\right\|_{L^\infty(\BB)}-\frac{2\pi}{1+\|\pax f_0\|^2_{L^\infty(\RR)}}\\
=4k-\frac{2\pi}{1+\|\pax f_0\|^2_{L^\infty(\RR)}}<0.
\end{multline}
Then we need to show that this quantity remains negative (at least) for a short time. In order to do this we define the following new energy:
\begin{equation}\label{energy2IA}
\mathfrak{E}_{\BB}[f]=E_\BB[f] +\frac{1}{2\pi m(t)-K\left\|\im\left(\text{P.V.}\int_\RR \Xi_1+\Xi_2\right)\right\|_{L^\infty(\BB)}-4k}.
\end{equation}
If $\mathfrak{E}_\BB[f]<\infty$ then $\frac{d}{dt}E_\BB[f]\leq \exp(c(l)(E_\BB[f]+1))$ and we have the correct \emph{'a priori'} estimates.

We need to bound $m'(t)$ and $\frac{d}{dt}\left\|\im\left(\text{P.V.}\int_\RR \Xi_1+\Xi_2d\eta\right)\right\|_{L^\infty(\BB)}$. Now, if we have a classical solution with $E_\BB[f]<\infty$, the Sobolev embedding gives us that 
$$
\re\frac{1}{1+(\pax f(\gamma))^2}\in C^1([0,T]\times\BB)
$$ 
Thus we can apply Rademacher Theorem to $m(t)$ in \eqref{m} and we get
$$
m'\leq \exp(c(l)(\mathfrak{E}_\BB[f]+1))
$$ 
Again, applying Rademacher Theorem, we have
$$
\frac{d}{dt}\left\|\im\left(\text{P.V.}\int_\RR \Xi_1+\Xi_2d\eta\right)\right\|_{L^\infty(\BB)}\leq \exp(c(l)(\mathfrak{E}_\BB[f]+1)).
$$
We get
$$
\frac{d}{dt}\mathfrak{E}_\BB[f]\leq \exp(c(l)(\mathfrak{E}_\BB[f]+1)),
$$
and then, we have
\begin{equation}\label{boundIA}
\mathfrak{E}_\BB[f](t)\leq -\frac{1}{c(l)}\log(\exp(-c(l)\mathfrak{E}_\BB[f_0])-c(l)\exp(c(l))t).
\end{equation}
Furthermore, there exists a time $T=T(f_0)$, $T<T^*=\frac{\exp(-c(l)(1+\mathfrak{E}_\BB[f_0]))}{c(l)}$ such that $\mathfrak{E}_{\BB}[f]\leq c(f_0)$. Now, for $\epsilon>0$, we consider
\begin{equation}
\mathcal{J}_\epsilon(x)=\frac{1}{\epsilon}\mathcal{J}\left(\frac{x}{\epsilon}\right),
\label{epsiIA} 
\end{equation}
where $\mathcal{J}$ is the heat kernel, and the regularized problem
\begin{equation}\label{eqregular2IA}
\pat f^{\epsilon,\delta}=F^\epsilon(f^{\epsilon,\delta})\text{ for $x\in\RR$, $t\geq0$ },\qquad f^{\epsilon,\delta}(x,0)=\jeps f_0(x)\text{ for $x\in\RR$},
\end{equation}
where
\begin{multline}\label{eqregularIA}
F^{\epsilon,\delta}(f^{\epsilon,\delta})=\jeps\left(\text{P.V.}\int_\RR \frac{(\jeps\pax f^{\epsilon,\delta}(x)-\jeps\pax f^{\epsilon,\delta}(x-\eta))\sinh(\eta)}{\cosh(\eta)-\cos(\jeps f^{\epsilon,\delta}(x)-\jeps f^{\epsilon,\delta}(x-\eta))+\delta}d\eta\right)\\
+\jeps\left(\text{P.V.}\int_\RR \frac{(\jeps\pax f^{\epsilon,\delta}(x)+\jeps\pax f^{\epsilon,\delta}(x-\eta))\sinh(\eta)}{\cosh(\eta)+\cos(\jeps f^{\epsilon,\delta}(x)+\jeps f^{\epsilon,\delta}(x-\eta))+\delta}d\eta\right).
\end{multline}
For these regularized problems we show the existence of classical solutions $f^{\epsilon,\delta} \in C^1([0,T^\epsilon],H^3(\RR))$. This fact follows from the proof of the local well-posedness result in Sobolev spaces (see Section \ref{sec2.1}). Now we pass to the limit $\delta\rightarrow0$, showing the existence of solutions $f^\epsilon\in C^1([0,T^\epsilon],H^3(\RR))$. Moreover, $f^\epsilon$ are analytic functions. Using the previous \emph{'a priori'} estimates for $\mathfrak{E}_\BB,$ we conclude the existence of an uniform time existence, $T$, for all $f^\epsilon.$ Finally, we can pass to the limit in $\epsilon,$ and we conclude the proof of the smoothing effect.
\end{proof}

\subsection{Well-posedness for analytical initial data}\label{sec2.3}

In this section we show that there is an unique local smooth solution when  the initial data are analytic curves $z(\alpha)$. For a similar result in the case with infinite depth see \cite{ccfgl}. We prove this result by a Cauchy-Kowalevski Theorem (see \cite{ccfgl, nirenberg1972abstract, nishida1977note}). We observe that there is no hypothesis on the Rayleigh-Taylor condition.

The complex extension of the equation can be written as 
$$
\pat z=\frac{\bar{\rho}}{2}F[z],
$$
where
\begin{multline}\label{eqcurvaIA}
F[z]=\text{P.V.}\int_\RR\frac{(\partial_\alpha z(\gamma)-\partial_\alpha z(\gamma-\eta))\sinh(z_1(\gamma)-z_1(\gamma-\eta))}{\cosh(z_1(\gamma)-z_1(\gamma-\eta))-\cos(z_2(\gamma)-z_2(\gamma-\eta))}\\
+\frac{(\partial_\alpha z_1(\gamma)-\partial_\alpha z_1(\gamma-\eta),\partial_\alpha z_2(\gamma)+\partial_\alpha z_2(\gamma-\eta))\sinh(z_1(\gamma)-z_1(\gamma-\eta))}{\cosh(z_1(\gamma)-z_1(\gamma-\eta))+\cos(z_2(\gamma)+z_2(\gamma-\eta))}d\eta.
\end{multline}
We define 
\begin{equation}\label{d-def}
d^-[z](\gamma,\eta)=\frac{\sinh^2(\eta/2)}{\cosh(z_1(\gamma)-z_1(\gamma-\eta))-\cos(z_2(\gamma)-z_2(\gamma-\eta))},
\end{equation}
\begin{equation}\label{d+def}
d^+[z](\gamma,\eta)=\frac{\cosh^2(\eta/2)}{\cosh(z_1(\gamma)-z_1(\gamma-\eta))+\cos(z_2(\gamma)+z_2(\gamma-\eta))}.
\end{equation}
The finiteness of the term $d^ -$ is the arc-chord condition in our domain, \emph{i.e.} if $d^-\in L^\infty$ then $z(\alpha)\neq z(\beta)$ and $|\paa z|>0$. The finiteness of the term $d^+$ means that the curve doesn't touch the boundaries.

We consider curves $z$ in the space 
\begin{equation}\label{Xr}
X_r=\{z \text{ analytic curves on $\BB_r$ such that } \|d^-[z]\|_{L^ \infty}<\infty \text{ and } \makebox{$\|d^+[z]\|_{L^ \infty}<\infty$}\},
\end{equation} 
where \makebox{$\BB_r=\{\alpha+i\xi,\;\;|\xi|<r\}$} and $d^ \pm$ are defined in \eqref{d-def} and \eqref{d+def}. We consider the following norm
\begin{equation}\label{normXr}
\|z\|_r=\|z(\alpha)-(\alpha,0)\|_{H^3(\BB_r)}=\sum_{\pm}\int_\RR|z(\zeta\pm r i)-(\zeta\pm r i,0)|^2d\zeta+\int_\RR|\paa^3 z(\zeta\pm r i)|^2d\zeta,
\end{equation}
where $H^3(\BB_r)$ denotes the Hardy-Sobolev space (see \cite{bakan2007hardy} and references therein). We remark that the fact that they are a Banach scale can be easily proved (see \cite{bakan2007hardy}). For notational convenience we write $\gamma=\alpha\pm ir$, $\gamma'=\alpha\pm ir'$ and we take $\bar{\rho}=2$ and $l=\pi/2$. We claim that, for $0<r'<r$, 
\begin{equation}\label{cauchy2}
\|\partial_\alpha \cdot\|_{L^2(\BB_{r'})}\leq\frac{C}{r-r'}\|\cdot\|_{L^2(\BB_r)}.
\end{equation}
Indeed, we apply Cauchy's integral formula with $\Gamma=\gamma'+(r-r')e^{i\theta}$ to conclude the claim. We need the following result
\begin{prop}\label{curva}
Consider $0\leq r'<r$ and the set
$$
O_R=\{z\in X_r\text{ such that }\|z\|_r<R, \|d^-[z]\|_{L^\infty(\BB_r)}<R, \|d^+[z]\|_{L^\infty(\BB_r)}<R\},  
$$
where $d^-[z]$ and $d^+[z]$ are defined in \eqref{d-def} and \eqref{d+def}. Then $F:O_R\rightarrow X_{r'}$ is continuous. Moreover, for $z,w\in O_R$, the following inequalities holds:
\begin{eqnarray}
&&\|F[z]\|_{H^3(\BB_{r'})}\leq\frac{C_R}{r-r'}\|z\|_r,\label{7.1}\\
&&\|F[z]-F[w]\|_{H^3(\BB_{r'})}\leq\frac{C_R}{r-r'}\|z-w\|_{H^3(\BB_r)},\label{7.2}\\
&&\sup_{\gamma\in \BB_r,\beta\in\RR}|F[z](\gamma)-F[z](\gamma-\beta)|\leq C_R|\beta|.\label{7.3}
\end{eqnarray}
\end{prop}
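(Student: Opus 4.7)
The plan is to split $F=F_1+F_2$ into the arc-chord-controlled piece (first line of \eqref{eqcurvaIA}, with denominator involving $d^-$) and the boundary-controlled piece (second line, with denominator involving $d^+$), and estimate each by an inner/outer decomposition in $\eta$, exactly as in the proof of Theorem \ref{IA}. First I would check that $F[z]$ defines an analytic function on $\BB_r$ whenever $z\in O_R$: on $\{|\eta|\le 1\}$ the denominator of $F_1$ satisfies $\cosh(z_1(\gamma)-z_1(\gamma-\eta))-\cos(z_2(\gamma)-z_2(\gamma-\eta))\ge \sinh^2(\eta/2)/R$ by definition of $d^-$, so the first integrand is $O_R(|\partial_\alpha z(\gamma)-\partial_\alpha z(\gamma-\eta)||\eta|^{-1})=O_R(1)$ using the Sobolev embedding $H^3(\BB_r)\hookrightarrow C^2(\BB_r)$; the second integrand is similarly bounded via $d^+$. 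On $\{|\eta|\ge 1\}$ the growth $\sinh(\eta)$ is absorbed by the growing denominators (as in the step following \eqref{eq0.1IA} with $R$ large), so both integrands decay like $e^{-|\eta|}$ and analyticity follows by differentiation under the integral sign.

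For \eqref{7.1} the bound on $\|F[z]\|_{L^2(\BB_{r'})}$ follows directly from the pointwise estimates just sketched together with $\|z\|_r<R$. The delicate piece is $\|\partial_\alpha^3 F[z]\|_{L^2(\BB_{r'})}$: after three outer $\alpha$-differentiations, several terms contain $\partial_\alpha^4 z$, which is not controlled by $\|z\|_r$. The key step is to apply the Cauchy-type inequality \eqref{cauchy2} to the outermost derivative,
\[
\|\partial_\alpha^3 F[z]\|_{L^2(\BB_{r'})}\le \frac{C}{r-r'}\|\partial_\alpha^2 F[z]\|_{L^2(\BB_r)},
\]
so that only $\partial_\alpha^3 z$ appears in the integrand; this quantity is controlled by $\|z\|_r$ multiplied by $L^\infty$-bounded factors (smooth functions of $z$ and its first and second derivatives, $d^\pm$-factors, and an integrable kernel in $\eta$ of the type in the first paragraph). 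Summing the pieces yields the bound $C_R\|z\|_r/(r-r')$ in \eqref{7.1}.

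Estimate \eqref{7.2} follows from $F[z]-F[w]=\int_0^1 DF[z_\tau](z-w)\,d\tau$ with $z_\tau=\tau z+(1-\tau)w\in O_{2R}$, applied together with the observation that the Gateaux derivative $DF[z_\tau]$ is itself a sum of singular integrals of the same structure as $F$ (with $d^\pm[z_\tau]$ in the denominators and $z-w$ appearing linearly with derivatives up to order one in the numerators), so the scheme of the previous paragraph applies verbatim. Estimate \eqref{7.3} is obtained without any derivative loss by noting that the integrand in \eqref{eqcurvaIA} is pointwise Lipschitz in $\gamma\in\BB_r$ with Lipschitz constant bounded by $\|z\|_{C^2(\BB_r)}$, $\|d^\pm[z]\|_{L^\infty(\BB_r)}$ and the integrable factor in $\eta$ from the first paragraph; $H^3(\BB_r)\hookrightarrow C^2(\BB_r)$ together with the defining bounds of $O_R$ then give $\|\partial_\gamma F[z]\|_{L^\infty(\BB_r)}\le C_R$, which yields \eqref{7.3}. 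Continuity of $F:O_R\to X_{r'}$ is immediate from \eqref{7.2}. The main obstacle is the bookkeeping in \eqref{7.1}: one must organise the three outer $\alpha$-differentiations so that \eqref{cauchy2} is invoked exactly once, producing the sharp $1/(r-r')$ loss required by the abstract Cauchy-Kowalevski machinery.
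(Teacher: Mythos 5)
Your framework matches what the paper intends: the paper gives no proof of Proposition~\ref{curva}, deferring entirely to \cite{ccfgl}, and your splitting $F=F_1+F_2$, the inner/outer decomposition in $\eta$, the use of $d^\pm$ to control the denominators, and the single application of \eqref{cauchy2} to produce the $1/(r-r')$ loss are all the right ingredients.

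There is, however, a genuine gap in the argument for \eqref{7.1} and hence for \eqref{7.2}. After applying \eqref{cauchy2} once, you must bound $\|\partial_\alpha^2 F[z]\|_{L^2(\BB_r)}$, and the most singular contribution from $F_1$ has the form
\[
\text{P.V.}\int_\RR \frac{(\partial_\alpha^3 z(\gamma)-\partial_\alpha^3 z(\gamma-\eta))\sinh(z_1(\gamma)-z_1(\gamma-\eta))}{\cosh(z_1(\gamma)-z_1(\gamma-\eta))-\cos(z_2(\gamma)-z_2(\gamma-\eta))}\,d\eta,
\]
whose inner part $|\eta|\le1$ behaves like $\text{P.V.}\int(\partial_\alpha^3 z(\gamma)-\partial_\alpha^3 z(\gamma-\eta))\,\eta^{-1}d\eta$. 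In your first paragraph the analogous difference $\partial_\alpha z(\gamma)-\partial_\alpha z(\gamma-\eta)$ was $O_R(|\eta|)$ because $\partial_\alpha^2 z$ is bounded; but $\partial_\alpha^3 z$ is the top derivative in $\|z\|_r$ and is only $L^2$ on $\partial\BB_r$, so this difference has no pointwise bound and the kernel that remains after factoring is genuinely $\eta^{-1}$, \emph{not} integrable. The claim that the integrand is controlled by ``an integrable kernel in $\eta$ of the type in the first paragraph'' is therefore false for this term, and Minkowski/Young cannot close it. What is actually needed here is the Calderon--Zygmund step that the paper itself uses in the proof of Theorem~\ref{teo1} when it decomposes $\Xi_1$ so that the whole-plane kernel appears as the leading singular piece: write the kernel as $c(\gamma)\eta^{-1}$ plus a bounded remainder, with $c(\gamma)$ a bounded smooth function of $\partial_\alpha z(\gamma)$ and $d^-[z]$, and then invoke the $L^2$-boundedness of the Hilbert transform to get $\|c\,H(\partial_\alpha^3 z)\|_{L^2}\le\|c\|_{L^\infty}\|\partial_\alpha^3 z\|_{L^2}\le C_R\|z\|_r$. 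The same repair is needed in \eqref{7.2} when both outer derivatives fall on $z-w$; and note also that $O_R$ is not convex (the constraints on $d^\pm$ are nonlinear), so the segment $z_\tau=\tau z+(1-\tau)w$ need not lie in any $O_{R'}$, and the Lipschitz bound \eqref{7.2} should be proved by direct subtraction and telescoping rather than by a Gateaux mean-value formula. Your argument for \eqref{7.3} is fine: there only $\partial_\alpha^2 z$ differences occur, and $H^3\hookrightarrow C^{2,\alpha}$ ($\alpha<1/2$) gives $|\partial_\alpha^2 z(\gamma)-\partial_\alpha^2 z(\gamma-\eta)|\lesssim|\eta|^\alpha$, which is indeed integrable against $\eta^{-1}$.
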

The proof of this proposition follows the same ideas as in \cite{ccfgl} and we left for the interested reader. With this Proposition we can prove the local existence result:

\begin{thm}\label{curvaCK2}
Let $z_0\in X_{r_0}$, for some $r_0>0$ (see definition \eqref{Xr}), be the initial data and assume that this initial data does not reach the boundaries and that the arc-chord condition is satisfied. Then there exists an analytic solution of the Muskat problem \eqref{eqcurva} for $t\in[-T,T]$ for a small enough $T>0$. 
\end{thm}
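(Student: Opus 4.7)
The plan is to deduce Theorem \ref{curvaCK2} from an abstract Cauchy-Kovalevski theorem (see \cite{nirenberg1972abstract, nishida1977note}) applied to the Banach scale $\{X_r\}_{0<r\leq r_0}$ defined in \eqref{Xr} with norm \eqref{normXr}. The evolution \eqref{eqcurva} written for the complex extension takes the form $\partial_t z = F[z]$ with $F$ as in \eqref{eqcurvaIA}, and Proposition \ref{curva} supplies precisely the three ingredients required: the loss-of-one-derivative bound \eqref{7.1} for $F$, the matching Lipschitz estimate \eqref{7.2} for $F[z]-F[w]$, and the auxiliary spatial-Lipschitz bound \eqref{7.3} for $F[z]$, which will be used to keep the iterates inside the set $O_R$.

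I would fix the initial datum $z_0 \in X_{r_0}$ and choose $R$ so that $\|z_0\|_{r_0}$, $\|d^-[z_0]\|_{L^\infty(\BB_{r_0})}$ and $\|d^+[z_0]\|_{L^\infty(\BB_{r_0})}$ all lie strictly below $R/2$, leaving room to absorb small perturbations. Then I would look for the solution on a shrinking family of strips $\BB_{r(t)}$ with $r(t)=r_0-\beta|t|$ and build it by Picard iteration
\begin{equation*}
z^{(n+1)}(t) = z_0 + \int_0^t F[z^{(n)}](s)\,ds,\qquad z^{(0)}(t) \equiv z_0.
\end{equation*}
The bounds \eqref{7.1}--\eqref{7.2} feed into the classical Nirenberg-Nishida argument: comparing intermediate strips $r(s)$ and $r(t)$ for $0\leq s\leq t$, the factor $1/(r(s)-r(t))$ appearing in \eqref{7.1}--\eqref{7.2} integrates against the linear profile $r(t)=r_0-\beta|t|$ to produce a geometric Cauchy sequence in $X_{r(T)}$, provided $\beta$ is chosen large relative to $C_R$ and $T$ is small enough that $\beta T < r_0$.

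The main obstacle is to verify that each iterate remains inside $O_R$ throughout $[-T,T]$, since the constants from Proposition \ref{curva} depend on $R$. The Sobolev-norm bound $\|z^{(n)}(t)\|_{r(t)}<R$ follows from the abstract scheme, but the pointwise quantities $\|d^\pm[z^{(n)}(t)]\|_{L^\infty}$ must be controlled by hand. Here the estimate \eqref{7.3} is crucial: integrating $\partial_t z^{(n+1)}=F[z^{(n)}]$ in time and using \eqref{7.3} on each slice yields, for $\gamma\in\BB_{r(t)}$ and $\beta\in\RR$,
\begin{equation*}
\bigl|\bigl(z^{(n+1)}(\gamma,t)-z^{(n+1)}(\gamma-\beta,t)\bigr) - \bigl(z_0(\gamma)-z_0(\gamma-\beta)\bigr)\bigr| \leq C_R|\beta||t|.
\end{equation*}
Plugging this, together with a pointwise bound on $z^{(n+1)}-z_0$, into the definitions \eqref{d-def} and \eqref{d+def} and using smoothness of $\cosh$ and $\cos$, shows that $\|d^\pm[z^{(n+1)}(t)]\|_{L^\infty}$ differs from $\|d^\pm[z_0]\|_{L^\infty}$ by $O(t)$, which stays below $R$ after one further shrinking of $T$.

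Once convergence of the Picard scheme is established, the limit $z\in C([-T,T];X_{r(T)})$ is the desired analytic solution, and uniqueness within $O_R$ follows from \eqref{7.2} by a standard Gronwall-type argument in the Banach scale. The backward-in-time statement requires no separate treatment, since the abstract Cauchy-Kovalevski theorem is symmetric in the sign of $t$: no Rayleigh-Taylor hypothesis is used, because the derivative loss of $F$ is compensated entirely by the $\beta|t|$ shrinking of the analyticity strip rather than by any parabolic smoothing mechanism.
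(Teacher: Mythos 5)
Your proposal matches the paper's proof in all essentials: both run a Picard iteration $z_{n+1}=z_0+\int_0^t F[z_n]\,ds$ on the Banach scale $\{X_r\}$ of Hardy--Sobolev spaces via the Nirenberg--Nishida abstract Cauchy--Kovalevski theorem, using Proposition~\ref{curva}'s bounds \eqref{7.1}--\eqref{7.2} for convergence and the spatial-Lipschitz estimate \eqref{7.3} (for small $\eta$), together with the uniform bound coming from \eqref{7.1} (for large $\eta$), to show that $d^{\pm}[z_{n+1}]$ stays bounded and hence the iterates remain in $O_R$ for a short time. The only cosmetic difference is that the paper carries this out by explicitly expanding $(d^{\pm}[z_{n+1}])^{-1}$ with trigonometric and hyperbolic addition formulas to obtain the lower bound $(d^{\pm}[z_{n+1}])^{-1}>1/R_0 - C_R(t^2+t)$, whereas you argue the same $O(t)$ control by invoking smoothness of $\cosh$ and $\cos$; the substance is the same.
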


\begin{proof}
Notice that $z_0\in X_{r_0}$ satisfies the arc-chord condition and does not reach the boundaries. Then, there exists $R_0$ such that $z_0\in O_{R_0}$. We take $r<r_0$ and $R>R_0$ in order to define $O_R$ and we consider the iterates
$$
z_{n+1}=z_0+\int_0^tF[z_n]ds,
$$
and assume by induction that $z_k\in O_R$ for $k\leq n$. Then, following the proofs in \cite{ccfgl, nirenberg1972abstract, nishida1977note}, we obtain a time $T_{CK}>0$. It remains to show that 
$$
\|d^-[z_{n+1}]\|_{L^\infty(\BB_r)}<R,\qquad\|d^+[z_{n+1}]\|_{L^\infty(\BB_r)}<R,
$$
for some times $T_A, T_B>0$ respectively. We have
\begin{eqnarray*}
(d^-[z_{n+1}])^{-1}&=&\frac{\cosh\left(z_{0_1}(\gamma)-z_{0_1}(\gamma-\eta)+\int_0^t F^1[z](\gamma)-F^1[z](\gamma-\eta)ds\right)}{\sinh^2(\eta/2)}\\
&&-\frac{\cos\left(z_{0_2}(\gamma)-z_{0_2}(\gamma-\eta)+\int_0^t F^2[z](\gamma)-F^2[z](\gamma-\eta)ds\right)}{\sinh^2(\eta/2)}\\
&=&\frac{\cosh\left(z_{0_1}(\gamma)-z_{0_1}(\gamma-\eta)\right)\cosh\left(\int_0^t F^1[z](\gamma)-F^1[z](\gamma-\eta)ds\right)}{\sinh^2(\eta/2)}\\
&&+\frac{\sinh\left(z_{0_1}(\gamma)-z_{0_1}(\gamma-\eta)\right)\sinh\left(\int_0^t F^1[z](\gamma)-F^1[z](\gamma-\eta)ds\right)}{\sinh^2(\eta/2)}\\
&&-\frac{\cos\left(z_{0_2}(\gamma)-z_{0_2}(\gamma-\eta)\right)\cos\left(\int_0^t F^2[z](\gamma)-F^2[z](\gamma-\eta)ds\right)}{\sinh^2(\eta/2)}\\
&&+\frac{\sin\left(z_{0_2}(\gamma)-z_{0_2}(\gamma-\eta)\right)\sin\left(\int_0^t F^2[z](\gamma)-F^2[z](\gamma-\eta)ds\right)}{\sinh^2(\eta/2)}.
\end{eqnarray*}
Using the classical trigonometric formulas we obtain
\begin{eqnarray*}
(d^-[z_{n+1}])^{-1}
&=&(d^-[z_0])^{-1}+\frac{\cosh\left(z_{0_1}(\gamma)-z_{0_1}(\gamma-\eta)\right)2\sinh^2\left((\int_0^t F^1[z](\gamma)-F^1[z](\gamma-\eta)ds)/2\right)}{\sinh^2(\eta/2)}\\
&&+\frac{\sinh\left(z_{0_1}(\gamma)-z_{0_1}(\gamma-\eta)\right)\sinh\left(\int_0^t F^1[z](\gamma)-F^1[z](\gamma-\eta)ds\right)}{\sinh^2(\eta/2)}\\
&&-\frac{\cos\left(z_{0_2}(\gamma)-z_{0_2}(\gamma-\eta)\right)2\sin^2\left((\int_0^t F^2[z](\gamma)-F^2[z](\gamma-\eta)ds)/2\right)}{\sinh^2(\eta/2)}\\
&&+\frac{\sin\left(z_{0_2}(\gamma)-z_{0_2}(\gamma-\eta)\right)\sin\left(\int_0^t F^2[z](\gamma)-F^2[z](\gamma-\eta)ds\right)}{\sinh^2(\eta/2)}\\
\end{eqnarray*}
Take $t\leq1$, assuming that $\eta\in B(0,1)$ and using the inequality \eqref{7.3} in Proposition \ref{curva}, we have
$$
(d^-[z_{n+1}])^{-1}> \frac{1}{R_0}-C^1_R(t^2+t).
$$
In the case where $\eta\in B^c(0,1)$, to ensure the decay at infinity, we use the inequality \eqref{7.1} in Proposition \ref{curva} to get
$$
(d^-[z_{n+1}])^{-1}>\frac{1}{R_0}-C^2_R(t^2+t).
$$
Thus, we can take 
$$
0<T_A<\min\left\{1,\sqrt{\left(\frac{1}{R_0}-\frac{1}{R}\right)\frac{1}{4\max\{C^1_R,C^2_R\}}}\right\},
$$
and then $\|d^-[z_{n+1}]\|_{L^\infty(\BB_r)}<R$. We proceed in the same way for $d^+[z_{n+1}].$ Using the classical trigonometric formulas and the previous inequalities we obtain
\begin{eqnarray*}
(d^+[z_{n+1}])^{-1}
&=&(d^+[z_0])^{-1}+\frac{\cosh\left(z_{0_1}(\gamma)-z_{0_1}(\gamma-\eta)\right)2\sinh^2\left((\int_0^t F^1[z](\gamma)-F^1[z](\gamma-\eta)ds)/2\right)}{\cosh^2(\eta/2)}\\
&&+\frac{\sinh\left(z_{0_1}(\gamma)-z_{0_1}(\gamma-\eta)\right)\sinh\left(\int_0^t F^1[z](\gamma)-F^1[z](\gamma-\eta)ds\right)}{\cosh^2(\eta/2)}\\
&&-\frac{\cos\left(z_{0_2}(\gamma)+z_{0_2}(\gamma-\eta)\right)2\sin^2\left((\int_0^t F^2[z](\gamma)+F^2[z](\gamma-\eta)ds)/2\right)}{\cosh^2(\eta/2)}\\
&&+\frac{\sin\left(z_{0_2}(\gamma)+z_{0_2}(\gamma-\eta)\right)\sin\left(\int_0^t F^2[z](\gamma)+F^2[z](\gamma-\eta)ds\right)}{\cosh^2(\eta/2)}\\
\end{eqnarray*}
thus, we can consider 
$$
0<T_B<\min\left\{1,\sqrt{\left(\frac{1}{R_0}-\frac{1}{R}\right)\frac{1}{2C^3_R}}\right\},
$$
and then $\|d^+[z_{n+1}]\|_{L^\infty(\BB_r)}<R$. Taking $0<T<\min\{T_{CK},T_A,T_B\}$, we conclude the proof. 
\end{proof}

\begin{coment}
We remark that there is not any hypothesis on the Rayleigh-Taylor condition in this existence result. As a corollary, we obtain that if we start in the Rayleigh-Taylor unstable case with an analytic graph, we have local existence and uniqueness for a short time. The same result can be proven in the more general case of $C^{2,\delta}(\BB_r)$ functions.
\end{coment}

\subsection{Ill-posedness in the Rayleigh-Taylor unstable case}\label{sec2.4}
Now, if we consider $\rho^2<\rho^1$ in our system, the problem is ill-posed in Sobolev spaces, \emph{i.e.} a singularity appears for arbitrarly small initial energies and times. We remark that, if the initial data is analytic and does not reach initially the boundaries, there is local existence (see Section \ref{sec2.3}). The idea is to use the instant analyticity forward in time to conclude our result. 
\begin{thm}\label{teoil}
There exists a solution $\tilde{f}$ of \eqref{eq0.1} with $\rho^2<\rho^1$ such that $\|\tilde{f}_0\|_{H^s(\RR)}<\epsilon$ and $\|\tilde{f}(\delta)\|_{H^s(\RR)}=\infty$, for any $s\geq4$, $\epsilon>0$ and small enough $\delta>0$.
\end{thm}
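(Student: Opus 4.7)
The argument combines time reversal with the instant analyticity of the Rayleigh--Taylor stable problem established in Theorem \ref{IA}. Observe that if $g(x,t)$ is a classical solution of \eqref{eq0.1} on $[0,\delta]$ with $\rho^2-\rho^1>0$, then $\tilde{f}(x,t):=g(x,\delta-t)$ is a classical solution of \eqref{eq0.1} on $[0,\delta]$ with the sign of $\rho^2-\rho^1$ flipped, i.e.\ in the Rayleigh--Taylor unstable regime. Hence it suffices to construct a stable-regime solution whose initial datum lies in $H^3_l(\RR)\setminus H^s(\RR)$ while its value at time $\delta$ is analytic and arbitrarily small in $H^s(\RR)$.

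Given $s\geq 4$, $\epsilon>0$, and a small target time $\delta>0$, I would pick any $g_0\in H^3_l(\RR)\setminus H^s(\RR)$ with $\|g_0\|_{L^\infty}<l$ (such functions exist, e.g.\ a compactly supported Littlewood--Paley sum $\sum_n a_n\psi_n(x)$ at scale $2^{-n}$ with coefficients tuned to lie in $H^3\setminus H^4$) and then rescale the amplitude so that $\|g_0\|_{H^3}=\eta$ is as small as I wish. Theorem \ref{teo1} then yields a unique classical solution $g\in C([0,T_\eta],H^3_l)$ of the stable problem, with existence time $T_\eta\to\infty$ as $\eta\to0$ (as read off from the ODE $E'\leq c(l)(E+1)^5$ in the proof of Theorem \ref{teo1}), so $\delta\leq T_\eta$ once $\eta$ is small. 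By Theorem \ref{IA}, $g(t)$ extends analytically to the strip $\BB(t)=\{|\im\zeta|<kt\}$ for $t\in(0,T_\eta]$, with width slope $k=k(\|\pax g_0\|_{L^\infty})>0$ bounded below as $\eta\to 0$, and the energy $\mathfrak{E}_\BB[g](t)$ is controlled by $\mathfrak{E}_\BB[g_0]$ via the bound \eqref{boundIA}, which in turn is of order $\|g_0\|_{H^3}$.

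The quantitative core of the argument is a Paley--Wiener-type estimate. An analytic function $h$ on the strip of half-width $k\delta$ with $H^3$ control on the boundary satisfies $|\hat h(\xi)|\lesssim\|h\|_{H^3(\BB(\delta))}\,e^{-k\delta|\xi|}$, and hence
\[
\|h\|_{H^s(\RR)}\leq C(k,\delta,s)\,\|h\|_{H^3(\BB(\delta))}.
\]
Applied to $h=g(\delta)$, this gives $\|g(\delta)\|_{H^s(\RR)}\leq C(k,\delta,s)\,\eta$, and choosing $\eta$ sufficiently small (depending on $\delta,\epsilon,s$) forces $\|g(\delta)\|_{H^s}<\epsilon$. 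Setting $\tilde{f}(x,t):=g(x,\delta-t)$ on $[0,\delta]$ then produces a solution of \eqref{eq0.1} in the unstable regime satisfying $\|\tilde{f}(\cdot,0)\|_{H^s}=\|g(\delta)\|_{H^s}<\epsilon$ and $\|\tilde{f}(\cdot,\delta)\|_{H^s}=\|g_0\|_{H^s}=\infty$, as required.

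The main obstacle will be making the quantitative bound on $\|g(\delta)\|_{H^s}$ depend only on the \emph{low} norm $\|g_0\|_{H^3}$, bypassing any reference to $\|g_0\|_{H^s}=\infty$. This requires tracking carefully the constants in the energy inequality \eqref{boundIA} to show that $\mathfrak{E}_\BB[g](t)$ for $t\in(0,\delta]$ is controlled solely in terms of $\|g_0\|_{H^3}$, and verifying that the strip width slope $k$ is bounded away from zero uniformly as $\eta\to 0$; both follow from the proof of Theorem \ref{IA} since $k$ can be chosen of size comparable to $1/(1+\|\pax g_0\|_{L^\infty}^2)$ in view of \eqref{kcond}.
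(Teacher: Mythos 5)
Your plan is essentially the same as the paper's: reverse time on a Rayleigh--Taylor stable solution with a rough, small-amplitude datum $g_0\in H^3_l\setminus H^s$, invoke the instant analyticity of Theorem \ref{IA} with a strip slope $k$ uniform in the amplitude, and convert analyticity on a strip of half-width $\sim k\delta$ into smallness of the $H^s(\RR)$ norm (the paper does this via Cauchy's integral formula, $\|\partial_x^4 f^\lambda(\delta)\|_{L^2(\RR)}\leq C(k^*\delta)^{-1}\|\partial_x^3 f^\lambda\|_{L^2(\BB)}$, rather than a Paley--Wiener bound, but the two are interchangeable). One claim is false as stated, though the argument survives: the existence time $T_\eta$ does \emph{not} tend to infinity as the amplitude $\eta\to0$, because the energies $E[f]$ and $\mathfrak{E}_\BB[f]$ carry additive terms such as $\|d[f]\|_{L^\infty}\geq 1/2$, so $E[f_0]\not\to 0$ and the ODE $E'\leq c(l)(E+1)^5$ yields only a lower bound $T_\eta\geq T^*>0$ uniform in $\eta$. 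This is precisely why the theorem reads ``small enough $\delta$'': you should first fix $\delta<\delta^*(g_0)\leq\min_\lambda T(\lambda,g_0)$ (uniform in the amplitude, as the paper extracts) and \emph{then} shrink the amplitude, rather than choosing $\delta$ arbitrarily up front and letting $\eta$ depend on it.
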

\begin{proof} We prove the case $s=4,$ being analogous the rest of the cases. Take $g_0(x)\in H^3(\RR)$ but $g_0\notin H^4(\RR)$. We consider a fixed constant $R\geq4$ and $0<\lambda<1$. Now we denote $f^\lambda(x,t)$ the solution to the problem \eqref{eq0.1} with initial datum $f^\lambda (x,0)=\lambda g_0(x)$. We know that $f^\lambda$ exists for a positive time $T(\lambda,g_0)$ and that it is analytic in a complex strip which grows with constant $k(\lambda,g_0)$ (see Sections \ref{sec2.1}, \ref{sec2.2} and equation \eqref{kcond}). We can take an uniform $k^*$ with respect to $\lambda$. Indeed, using the definition of \eqref{m},
$$
k(\lambda,g_0)=\frac{\pi m(0)}{4}=\frac{\pi}{4}\frac{1}{1+\lambda^2\|\pax g_0\|^2_{L^\infty(\RR)}}\geq \frac{\pi}{4}\frac{1}{1+\|\pax g_0\|^2_{L^\infty(\RR)}}=k^*(g_0).
$$
Then the condition
$$
4k^*(g_0)-2\pi m(0)=4k^*(g_0)-\frac{2\pi}{1+\lambda^2\|\pax g_0\|_{L^\infty(\RR)}^2}<0 
$$
is satisfied. We have
\begin{multline*}
\mathfrak{E}_{\BB}[f^\lambda (0)]=\lambda^2\|g_0\|^2_{H^3(\RR)}+\|d^-[\lambda g_0]\|_{L^\infty(\RR)}+\|d^+[\lambda g_0]\|_{L^\infty(\RR)}\\
+\|D[\lambda g_0]\|_{L^\infty(\RR)}+\frac{1+\lambda^2\|\pax g_0\|^2_{L^\infty(\RR)}}{\pi},
\end{multline*}
with
$$
\|d^- [\lambda g_0]\|_{L^\infty(\RR)}=\frac{\sinh^2(\eta/3)}{2\sinh^2(\eta/2)\left(1+\frac{\sin^2\left(\frac{\lambda}{2}(g_0(x)-g_0(x-\eta))\right)}{\sinh^2(\eta/2)}\right)}\leq \frac{2}{9},
$$
\begin{multline*}
\|d^+ [\lambda g_0]\|_{L^\infty(\RR)}=\frac{\cosh^2(\eta/3)}{2\cosh^2(\eta/2)\left(1-\frac{\sin^2\left(\frac{\lambda}{2}(g_0(x)+g_0(x-\eta))\right)}{\cosh^2(\eta/2)}\right)}\\\leq\frac{\cosh^2(\eta/3)}{2\cosh^2(\eta/2)\left(1-\frac{\sin^2\left(\frac{1}{2}(g_0(x)+g_0(x-\eta))\right)}{\cosh^2(\eta/2)}\right)}=\|d^+ [g_0]\|_{L^\infty(\RR)},
\end{multline*}
and
$$
\|D [\lambda g_0]\|_{L^\infty(\RR)}=\frac{1}{\cosh(R)-2\cosh(2\lambda\|g_0\|_{L^\infty(\RR)})}\leq\frac{1}{\cosh(4)-2\cosh(\pi)}.
$$
Therefore, we conclude the following uniform bound
$$
\mathfrak{E}_{\BB}[f^\lambda (0)]\leq c(g_0),
$$
with $c(g_0)$ some constant depending on $g_0$ that changes from line to line. We also take 
$$
0<\delta^*(g_0)=\frac{1}{c\exp(c(g_0))}\leq\min_{\lambda\in[0,1]}T(\lambda,g_0).
$$ 
Now, we consider $0<\delta<\delta^*(g_0)$. We remark that all $f^\lambda (x,t)$ exists up to time $\delta^*(g_0)$ and, by means of the instant analyticity result in Section \ref{sec2.3}, we have 
\begin{equation}\label{unilambda}
\mathfrak{E}_{\BB}[f^\lambda(t)]\leq c(g_0),\;\;\forall 0<t<\delta^*(g_0).
\end{equation}
Now define $\tilde{f}^{\lambda,\delta}(x,t)=f(x,-t+\delta)$. We have 
$$
\|\tilde{f}^{\lambda,\delta}(\delta)\|_{H^4(\RR)}=\lambda\|g_0\|_{H^4(\RR)}=\infty.
$$ 
Recall that $f^\lambda$ is analytic in the common complex strip growing with constant $k^*(g_0)$ for all $0<\lambda<1$. Then, applying Cauchy's integral formula with the curve $\Gamma=x+k^*(g_0)\delta e^{i\theta}$ and that Hardy spaces on growing strips are a Banach scale, we get
$$
\|\pax^4 \tilde{f}^{\lambda,\delta}(0)\|_{L^2(\RR)}=\|\pax^4 f^{\lambda}(\delta)\|_{L^2(\RR)}\leq \frac{C}{k^*(g_0)\delta}\|\pax^3 f^\lambda\|_{L^2(\BB_{\delta^*})}.
$$
Using the uniform energy bound \eqref{unilambda} we have 
$$
\|\pax^3 f^\lambda\|_{L^2(\BB_\delta^*)}\leq c(g_0)\lambda\|\pax^3 g_0\|_{L^2(\RR)},
$$
thus
$$
\|\pax^4 \tilde{f}^{\lambda,\delta}(0)\|_{L^2(\RR)}\leq \frac{c(g_0)}{\delta}\lambda.
$$
Now, given $\epsilon>0$ take $0<\lambda=\min\left\{1,\frac{\delta\epsilon}{c(g_0)}\right\}$ to conclude $\|\pax^4 \tilde{f}^{\lambda,\delta}(0)\|_{L^2(\RR)}<\epsilon.\quad$\end{proof}

\begin{coment}We remark that the problem in the whole plane is also ill-posed in the unstable regime case but the proof of this fact is different and depends on the kernel appearing in the whole plane. Therefore, the same idea in the proof can not work in the confined case. Moreover, the ill-posedness result above is different from those in \cite{c-g07, SCH} because we do not require a family of solutions having arbitrary long time existence.
\end{coment}


\section{Differences between the two regimes}\label{sec3}

In this section we study some properties  of the regime with $0<\mathcal{A}<1$ (finite depth) which are different with respect to the regime $\mathcal{A}=0$ (infinite depth).

We show the qualitative behaviour for $\|f\|_{L^ \infty}$ and $\|\pax f\|_{L^ \infty}$, and the existence of turning waves. A \emph{'turning wave'} is a blow up for $\pax f$ (see \cite{ccfgl}). 

\subsection{Maximum Principles}\label{sec3.1}

In this section we show a maximum principle and a decay estimate for $\|f\|_{L^\infty}$ and a maximum principle for $\|\pax f\|_{L^\infty}$ for a special class of initial data. In order to 
prove the maximum principles, the key point is to compare the local and the nonlocal terms that appear in the ODEs for the evolution of the $L^\infty$ norms.

\subsubsection{Maximum principle for $\|f\|_{L^\infty}$}
\begin{thm}\label{teo2}
Let $f(t)\in H^ 3_l(\RR)$ be the unique classical solution of \eqref{eq0.1} in the Rayleigh-Taylor stable case. Then $f$ satisfies that 
$$
\|f(t)\|_{L^\infty}\leq \|f_0\|_{L^\infty}.
$$
\end{thm}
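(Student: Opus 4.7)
The plan is to prove non-increase of $\|f(t)\|_{L^\infty}$ by tracking separately the maximum $M(t) := \max_x f(x,t)$ and the minimum $m(t) := \min_x f(x,t)$, showing $M'(t) \le 0$ and $m'(t) \ge 0$ for a.e.\ $t$; together these give $\|f(t)\|_{L^\infty} = \max\{M(t), -m(t)\} \le \|f_0\|_{L^\infty}$. Theorem~\ref{teo1} provides $f \in C([0,T];H^3) \hookrightarrow C([0,T];C^2)$, so $M$ is Lipschitz in time and, by a standard Rademacher-type envelope argument, $M'(t) = \pat f(x_t,t)$ for a.e.\ $t$ at any $x_t$ realizing the maximum.

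At such an $x_t$, $\pax f(x_t) = 0$, so the $\pax f(x_t)$ terms in \eqref{eq0.1} cancel and
\begin{equation*}
M'(t) \;=\; \frac{\rho^2 - \rho^1}{8l}\,\mathrm{P.V.}\!\int_\RR \pax f(x_t-\eta)\,\bigl(\Xi_2(x_t,\eta) - \Xi_1(x_t,\eta)\bigr)\,d\eta.
\end{equation*}
The decisive algebraic step, obtained from $\cos A + \cos B = 2\cos\tfrac{A+B}{2}\cos\tfrac{A-B}{2}$ applied with $A = f(x_t)-f(x_t-\eta)$ and $B = f(x_t)+f(x_t-\eta)$, is the identity
\begin{equation*}
\Xi_1(x_t,\eta) - \Xi_2(x_t,\eta) \;=\; \frac{2\sinh\eta\,\cos f(x_t)\,\cos f(x_t-\eta)}{\bigl[\cosh\eta - \cos(f(x_t)-f(x_t-\eta))\bigr]\bigl[\cosh\eta + \cos(f(x_t)+f(x_t-\eta))\bigr]}.
\end{equation*}
Since $\|f\|_{L^\infty} < l = \pi/2$, both cosines in the numerator and both bracketed factors in the denominator are strictly positive for $\eta \ne 0$; hence $\Xi_1 - \Xi_2$ inherits the sign of $\sinh\eta$, diverges like $2/\eta$ near $\eta = 0$, and decays exponentially as $|\eta|\to\infty$.

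Writing $\pax f(x_t-\eta) = -\partial_\eta f(x_t-\eta)$ and integrating by parts in $\eta$ (boundary contributions vanish by the decay of $f$ and of $\Xi_1-\Xi_2$), then subtracting the vanishing constant $M\int_\RR \partial_\eta(\Xi_2-\Xi_1)\,d\eta = 0$, I arrive at
\begin{equation*}
M'(t) \;=\; \frac{\rho^2 - \rho^1}{8l}\int_\RR \bigl(M - f(x_t-\eta)\bigr)\,\partial_\eta(\Xi_1 - \Xi_2)(x_t,\eta)\,d\eta,
\end{equation*}
in which $M - f(x_t-\eta) \ge 0$ everywhere. The desired sign $M'(t)\le 0$ then follows from the fact that, on each half-line $\eta > 0$ and $\eta < 0$, the combined kernel $\Xi_1-\Xi_2$ is decreasing --- it goes from $+\infty$ at $0^+$ to $0$ at $+\infty$, and from $0$ at $-\infty$ to $-\infty$ at $0^-$ --- so $\partial_\eta(\Xi_1 - \Xi_2) \le 0$ pointwise. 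The completely analogous computation at a point $x_t^*$ realizing the minimum (where $\pax f(x_t^*)=0$ and now $f(x_t^*-\eta) - m \ge 0$) reverses the signs and yields $m'(t) \ge 0$.

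The main obstacle is the monotonicity claim for $\Xi_1-\Xi_2$: in the deep-water regime only the singular kernel $\Xi_1$ contributes and its sign structure is standard (\cite{c-g09}), but in the confined setting one must verify that the boundary-induced kernel $\Xi_2$, which enters with opposite sign, does not disrupt the monotonicity of the combined kernel after differentiation in $\eta$. The explicit factorization displayed above, together with the strict bound $\|f\|_{L^\infty} < l$ that keeps the denominator factor $\cosh\eta + \cos(f(x_t)+f(x_t-\eta))$ bounded away from zero, is precisely what makes this verification go through.
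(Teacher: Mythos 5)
The reduction to
\begin{equation*}
M'(t)=\frac{\rho^2-\rho^1}{8l}\int_\RR\bigl(M-f(x_t-\eta)\bigr)\,\partial_\eta(\Xi_1-\Xi_2)(x_t,\eta)\,d\eta
\end{equation*}
is sound, and the factorization of $\Xi_1-\Xi_2$ via $\cos A+\cos B=2\cos\frac{A+B}{2}\cos\frac{A-B}{2}$ is a genuinely useful identity that correctly shows the kernel itself has the sign of $\sinh\eta$. But the decisive step --- that $\partial_\eta(\Xi_1-\Xi_2)\le0$ pointwise --- is asserted without proof and is in fact false. The endpoint behavior you describe ($+\infty$ at $0^+$, $0$ at $+\infty$) does not imply monotonicity, and the kernel depends on $\eta$ not only through $\sinh\eta,\cosh\eta$ but also through $f(x_t-\eta)$, which need not be monotone. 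Concretely, with the paper's normalization $2l=\pi$, take $f\in H^3_l$ with a global maximum $f(x_t)=\pi/4$, another maximum of the same height at $x_t-2$, and a valley $f(x_t-1)=-\pi/4$ in between. Your own factorization gives
\begin{equation*}
(\Xi_1-\Xi_2)(x_t,1)=\frac{\sinh 1}{\cosh 1\,(\cosh 1+1)}\approx 0.30,\qquad (\Xi_1-\Xi_2)(x_t,2)=\frac{\sinh 2}{(\cosh 2-1)\cosh 2}\approx 0.35,
\end{equation*}
so the combined kernel \emph{increases} somewhere on $(1,2)$ and $\partial_\eta(\Xi_1-\Xi_2)>0$ there. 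The sign of the integrand is therefore not controlled pointwise, and the inference $M'(t)\le0$ does not follow.

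The paper avoids this obstacle entirely: it never differentiates the kernel in $\eta$. Instead it rewrites both pieces in terms of $\tan\theta$ and $\tan\sigma$ (with $\theta=\frac{f(x_t)-f(x_t-\eta)}{2}$, $\sigma=\frac{\pi}{2}-f(x_t)$, both nonnegative at the maximum), integrates by parts against the antiderivative $G(x)=-\frac{x}{1+x^2}+\arctan x$, expresses $\partial_t f(x_t)=-\int_\RR\cosh^{-2}(\eta/2)\,\Pi\,d\eta$, and proves $\Pi\ge 0$ by a purely algebraic rearrangement that exploits $\tan\theta,\tan\sigma\ge0$ and $\tanh^2(\eta/2)\le1$. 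The positivity is extracted from the algebraic structure of the integrand, not from any monotonicity in $\eta$. A smaller remark: $\text{P.V.}\int_\RR\partial_\eta(\Xi_2-\Xi_1)\,d\eta$ does not vanish --- it diverges, the boundary contributions at $\eta=0^\pm$ blowing up like $1/\epsilon$; your integration by parts should instead be carried out directly against $\partial_\eta\bigl(M-f(x_t-\eta)\bigr)$, for which the boundary terms at $0^\pm$ vanish because $M-f(x_t-\eta)=O(\eta^2)$.
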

\begin{proof} Due to the smoothness of $f$ in space and time we have that $\|f(t)\|_{L^\infty}=f(x_t)$ is Lipschitz. Then, using Rademacher Theorem, we have that $f(x_t)$ is differentiable almost everywhere and thus we get that 
\begin{equation}\label{eq-MP}
\frac{d}{dt}\|f(t)\|_{L^\infty}=\pat f(x_t)=\text{P.V.}\int_\RR\partial_\eta f(x_t-\eta)(\Xi_1(x_t,\eta,f)-\Xi_2(x_t,\eta,f))d\eta=I_1+I_2.
\end{equation}
Let us introduce the following notation:
\begin{equation}\label{theta}
\theta=\frac{f(x_t)-f(x_t-\eta)}{2},
\qquad
\bar{\theta}=\frac{f(x_t)+f(x_t-\eta)}{2}.
\end{equation}
Now, since $\Xi_1$ is defined as \eqref{Xi1}, using the classical and hyperbolic trigonometric formulas for the half-angle, we have
\begin{eqnarray*}
I_1&=&-2\text{P.V.}\int_\RR\partial_\eta \theta\frac{2\sinh\left(\frac{\eta}{2}\right)\cosh\left(\frac{\eta}{2}\right)}{\cosh\left(\eta\right)-\frac{1-\tan^2\left(\theta\right)}{1+\tan^2\left(\theta\right)}}=-2\text{P.V.}\int_\RR\partial_\eta \theta\frac{2\sinh\left(\frac{\eta}{2}\right)\cosh\left(\frac{\eta}{2}\right)}{\cosh\left(\eta\right)-1+\frac{2\tan^2\left(\theta\right)}{1+\tan^2\left(\theta\right)}}\\
&=&-2\text{P.V.}\int_\RR\partial_\eta \theta\frac{(1+\tan^2(\theta))\coth\left(\frac{\eta}{2}\right)}{1+\tan^2\left(\theta\right)\coth^2\left(\frac{\eta}{2}\right)}=-2\text{P.V.}\int_\RR\frac{\partial_\eta \theta}{\cos^2(\theta)}\frac{\coth\left(\frac{\eta}{2}\right)}{1+\tan^2\left(\theta\right)\coth^2\left(\frac{\eta}{2}\right)}\\
&=&-\text{P.V.}\int_\RR\partial_\eta\tan(\theta)\frac{2\coth\left(\frac{\eta}{2}\right)}{1+(\tan(\theta)\coth(\eta/2))^2}d\eta\\
&=&-4\frac{\tan\left(\frac{f(x_t)}{2}\right)}{1+\tan^2\left(\frac{f(x_t)}{2}\right)}+\text{P.V.}\int_\RR\tan(\theta)\partial_\eta\left(\frac{2\coth\left(\frac{\eta}{2}\right)}{1+(\tan(\theta)\coth(\eta/2))^2}\right)d\eta,
\end{eqnarray*}
where we integrate by parts. Considering
$$
G(x)=\frac{-x}{1+x^2}+\arctan(x),
$$
we have
\begin{multline*}
I_1 =-4\frac{\tan\left(\frac{f(x_t)}{2}\right)}{1+\tan^2\left(\frac{f(x_t)}{2}\right)}-2\int_\RR\partial_\eta \left[G\left(\frac{\tan(\theta)}{\tanh(\frac\eta 2)}\right)\right]d\eta\\-\text{P.V.}\int_\RR\frac{\tan(\theta)}{\sinh^2(\frac\eta 2)}\frac{1}{1+(\tan(\theta)\coth(\frac\eta 2))^2}d\eta.
\end{multline*}
Then, we get
\begin{equation}
I_1=-2f(x_t)-\text{P.V.}\int_\RR\frac{\tan(\theta)}{\sinh^2(\eta/2)}\frac{1}{1+(\tan(\theta)\coth(\eta/2))^2}d\eta.
\label{max1}
\end{equation}
Anagously for the $I_2$ term, we use the classical and hyperbolic trigonometric formulas. In this case we have to write all in terms of the $\tan(\bar{\theta})$. This is possible because $x_t$ is a maximum point. Since $\Xi_2$ is defined as \eqref{Xi2} and using the same function $G$ evaluated in $\tan(\bar{\theta})\tanh(\eta/2)$, we get the following expression for $I_2$ 
\begin{eqnarray}
I_2&=&-2\text{P.V.}\int_\RR\partial_\eta \bar{\theta}\frac{2\sinh\left(\frac{\eta}{2}\right)\cosh\left(\frac{\eta}{2}\right)}{\cosh\left(\eta\right)+\frac{1-\tan^2\left(\bar{\theta}\right)}{1+\tan^2\left(\theta\right)}}=-2\text{P.V.}\int_\RR\partial_\eta \bar{\theta}\frac{2\sinh\left(\frac{\eta}{2}\right)\cosh\left(\frac{\eta}{2}\right)}{\cosh\left(\eta\right)-1+\frac{2}{1+\tan^2\left(\bar{\theta}\right)}}\nonumber\\
&=&-2\text{P.V.}\int_\RR\partial_\eta \bar{\theta}\frac{(1+\cot^2(\bar{\theta}))\coth\left(\frac{\eta}{2}\right)}{1+\cot^2\left(\bar{\theta}\right)\coth^2\left(\frac{\eta}{2}\right)}=-2\text{P.V.}\int_\RR\frac{\partial_\eta \bar{\theta}}{\sin^2(\bar{\theta})}\frac{\coth\left(\frac{\eta}{2}\right)}{1+\cot^2\left(\bar{\theta}\right)\coth^2\left(\frac{\eta}{2}\right)} \nonumber\\
&=&-2\text{P.V.}\int_\RR\frac{\partial_\eta \bar{\theta}}{\cos^2(\bar{\theta})}\frac{\tanh\left(\frac{\eta}{2}\right)}{1+\tan^2\left(\bar{\theta}\right)\tanh^2\left(\frac{\eta}{2}\right)}=-2\text{P.V.}\int_\RR\partial_\eta \tan(\bar{\theta})\frac{\tanh\left(\frac{\eta}{2}\right)}{1+\tan^2\left(\bar{\theta}\right)\tanh^2\left(\frac{\eta}{2}\right)} \nonumber\\ \nonumber\\
&=&-2f(x_t)+\text{P.V.}\int_\RR\frac{\cot(\bar{\theta})}{\sinh^2(\eta/2)}\frac{1}{1+(\cot(\bar{\theta})\coth(\eta/2))^2}d\eta.
\label{max2}
\end{eqnarray}
Therefore, by \eqref{max1} and \eqref{max2}, we have in \eqref{eq-MP}
\begin{multline}\label{eqM}
\partial_tf(x_t)=-4f(x_t)+\int_\RR\frac{\cot(\bar{\theta})}{\cosh^2(\eta/2)}\frac{1}{\tanh^2(\eta/2)+\cot^2(\bar{\theta})}d\eta\\
-\text{P.V.}\int_\RR\frac{\tan(\theta)}{\cosh^2(\eta/2)}\frac{1}{\tanh^2(\eta/2)+\tan^2(\theta)}d\eta.
\end{multline}
Due to the definition of $\bar\theta$
$$
\cot(\bar{\theta})=\tan\left(\frac{\pi}{2}-\bar{\theta}\right)=\tan\left(\frac{\pi}{2}-f(x_t)+\theta\right).
$$
Moreover, using that
$$
\arctan(\tan(f(x_t))\tanh(\eta/2))\bigg{|}^\infty_{-\infty}=2f(x_t),
$$
we can write 
$$
4f(x_t)=\int_\RR\frac{1}{\cosh^2\left(\frac{\eta}{2}\right)}\frac{\tan(\frac{\pi}{2}-f(x_t))}{\tan^2(\frac{\pi}{2}-f(x_t))+\tanh^2\left(\frac{\eta}{2}\right)}d\eta,
$$
and we use the equality
$$
\tan\left(\frac{\pi}{2}-f(x_t)+\theta\right)=\frac{\tan(\frac{\pi}{2}-f(x_t))+\tan(\theta)}{1-\tan(\frac{\pi}{2}-f(x_t))\tan(\theta)}.
$$
By notational convenience we write $\sigma=\frac{\pi}{2}-f(x_t)$. We define
\begin{multline*}
\Pi(x,\eta,t)= \frac{\tan(\sigma)}{\tan^2(\sigma)+\tanh^2\left(\frac{\eta}{2}\right)}+\frac{\tan(\theta)}{\tan^2(\theta)+\tanh^2\left(\frac{\eta}{2}\right)}\\
 -\frac{(\tan(\sigma)+\tan(\theta))(1-\tan(\sigma)\tan(\theta))}{(\tan(\sigma)+\tan(\theta))^2+(1-\tan(\sigma)\tan(\theta))^2\tanh^2\left(\frac{\eta}{2}\right)},
\end{multline*}
and, using \eqref{eqM}, we have
\begin{equation}\label{PII}
\partial_tf(x_t)=-\int_\RR\frac{1}{\cosh^2(\eta/2)}\Pi(x,\eta,t)d\eta.
\end{equation}
So we need to prove that $\Pi\geq0$ (respectively $\leq0$) if $\|f\|_{L^\infty}=\max_x f(x)$ (respectively $\min_x f(x)$). We have
\begin{multline}\label{PIII}
\Pi=\frac{\tan(\sigma)\tan(\theta)(\tan(\theta)+2\tan(\sigma))}{[\tan^2(\sigma)+\tanh^2\left(\frac{\eta}{2}\right)][(\tan(\sigma)+\tan(\theta))^2+(1-\tan(\sigma)\tan(\theta))^2\tanh^2\left(\frac{\eta}{2}\right)]}\\
+\frac{\tan^2(\sigma)\tan(\theta)(\tan(\sigma)\tan(\theta)-2)\tanh^2\left(\frac{\eta}{2}\right)}{[\tan^2(\sigma)+\tanh^2\left(\frac{\eta}{2}\right)][(\tan(\sigma)+\tan(\theta))^2+(1-\tan(\sigma)\tan(\theta))^2\tanh^2\left(\frac{\eta}{2}\right)]}\\
+\frac{\tan(\sigma)\tan(\theta)(\tan(\sigma)+2\tan(\theta))}{[\tan^2(\theta)+\tanh^2\left(\frac{\eta}{2}\right)][(\tan(\sigma)+\tan(\theta))^2+(1-\tan(\sigma)\tan(\theta))^2\tanh^2\left(\frac{\eta}{2}\right)]}\\
+\frac{\tan(\sigma)\tan^2(\theta)(\tan(\sigma)\tan(\theta)-2)\tanh^2\left(\frac{\eta}{2}\right)}{[\tan^2(\theta)+\tanh^2\left(\frac{\eta}{2}\right)][(\tan(\sigma)+\tan(\theta))^2+(1-\tan(\sigma)\tan(\theta))^2\tanh^2\left(\frac{\eta}{2}\right)]}\\
+\frac{(\tan(\sigma)+\tan(\theta))\tan(\sigma)\tan(\theta)}{(\tan(\sigma)+\tan(\theta))^2+(1-\tan(\sigma)\tan(\theta))^2\tanh^2\left(\frac{\eta}{2}\right)}.
\end{multline}
Rearranging, we get
\begin{multline*}
 \Pi=\frac{\tan(\sigma)\tan^2(\theta)[1+\tan^2(\sigma)\tanh^2\left(\frac{\eta}{2}\right)]}{[\tan^2(\sigma)+\tanh^2\left(\frac{\eta}{2}\right)][(\tan(\sigma)+\tan(\theta))^2+(1-\tan(\sigma)\tan(\theta))^2\tanh^2\left(\frac{\eta}{2}\right)]}\\
+\frac{2\tan^2(\sigma)\tan(\theta)[1-\tanh^2\left(\frac{\eta}{2}\right)]}{[\tan^2(\sigma)+\tanh^2\left(\frac{\eta}{2}\right)][(\tan(\sigma)+\tan(\theta))^2+(1-\tan(\sigma)\tan(\theta))^2\tanh^2\left(\frac{\eta}{2}\right)]}\\
+\frac{\tan^2(\sigma)\tan(\theta)[1+\tan^2(\theta)\tanh^2\left(\frac{\eta}{2}\right)]}{[\tan^2(\theta)+\tanh^2\left(\frac{\eta}{2}\right)][(\tan(\sigma)+\tan(\theta))^2+(1-\tan(\sigma)\tan(\theta))^2\tanh^2\left(\frac{\eta}{2}\right)]}\\
+\frac{2\tan(\sigma)\tan^2(\theta)[1-\tanh^2\left(\frac{\eta}{2}\right)]}{[\tan^2(\theta)+\tanh^2\left(\frac{\eta}{2}\right)][(\tan(\sigma)+\tan(\theta))^2+(1-\tan(\sigma)\tan(\theta))^2\tanh^2\left(\frac{\eta}{2}\right)]}\\
+\frac{(\tan(\sigma)+\tan(\theta))\tan(\sigma)\tan(\theta)}{(\tan(\sigma)+\tan(\theta))^2+(1-\tan(\sigma)\tan(\theta))^2\tanh^2\left(\frac{\eta}{2}\right)}.
\end{multline*}
Now, we use that $\tanh^2\left(\frac{\eta}{2}\right)\leq1$. If $\|f\|_{L^\infty}=\max_x f(x)$, the definitions of $\theta$ and $\sigma$ give us that $\tan(\theta),\tan(\sigma)>0$  and obtaining that $\Pi\geq 0$. This concludes the proof for this case.
For the case where the $L^\infty$ norm is achieved in the minimum the proof is analogous. Indeed, we have that in this case $\Pi\leq0$ because $\tan(\sigma),\tan(\theta)\leq0$. \end{proof}

\begin{coment}
The main difference of this result with respect to the one in \cite{c-g09} is that we have positive and negative terms in \eqref{eqM}. Thus we have to balance them to obtain our result. We note that the local terms dissapear with infinite depth.
\end{coment}

\subsubsection{A decay estimate for $\|f(t)\|_{L^\infty}$}

\begin{thm}\label{decay}
Let $f_0\geq0$, $f_0\in L^1\cap H^k_l(\RR)$ be the initial datum and assume $\rho^2-\rho^1>0$. Then the solution $f(x,t)$ of equation \eqref{eq0.1} satisfies the inequality 
$$
\frac{d}{dt}\|f(t)\|_{L^\infty}\leq-c(\|f_0\|_{L^1},\|f_0\|_{L^\infty},\rho^2,\rho^1,l)e^{-\frac{\pi\|f_0\|_{L^1}}{l\|f(t)\|_{L^\infty}}}.
$$
\end{thm}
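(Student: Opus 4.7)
The plan is to use identity \eqref{PII} from the proof of Theorem \ref{teo2}:
$$\partial_t f(x_t) = -\int_\RR \frac{\Pi(x_t,\eta,t)}{\cosh^2(\eta/2)}\, d\eta,\qquad \Pi \geq 0,$$
where $M := f(x_t) = \|f(t)\|_{L^\infty}$, $\sigma = \pi/2 - M$, and $\theta = (M - f(x_t - \eta))/2 \in [0, M/2]$. Since $f_0 \geq 0$, the maximum principle of Theorem \ref{teo2} applied to both $f$ and $-f$ gives $0 \leq f(t) \leq \|f_0\|_{L^\infty}$; combined with the mean conservation \eqref{mean}, this yields $\|f(t)\|_{L^1} = \|f_0\|_{L^1} =: L$.

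The key quantitative step is to identify a ``good'' set $A := \{\eta : f(x_t-\eta) \leq M/2\}$, establish a pointwise lower bound on $\Pi$ there, and estimate its $\cosh^{-2}$-weighted measure via the $L^1$ bound. For the pointwise bound, I exploit the five-term rearrangement of $\Pi$ derived in the proof of Theorem \ref{teo2} (formula \eqref{PIII}): bounding $\tanh^2(\eta/2) \leq 1$ in the denominator of the last summand (which carries no $(1-\tanh^2)$ factor) yields
$$\Pi \;\geq\; \frac{(\tan\sigma + \tan\theta)\tan\sigma\tan\theta}{(\tan\sigma + \tan\theta)^2 + (1 - \tan\sigma\tan\theta)^2}, \qquad \forall\,\eta \in \RR.$$
On $A$ one has $\theta \geq M/4$, $\tan\sigma = \cot M$, and $\tan\sigma\tan\theta < 1$; elementary trigonometric estimates then give a strictly positive lower bound $\Pi \geq h(M, \|f_0\|_{L^\infty})$ which behaves like $M$ for small $M$.

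For the measure estimate, Chebyshev's inequality yields $|A^c| \leq 2L/M$ (from $L \geq \int_{A^c} f \geq (M/2)|A^c|$). Since $\cosh^{-2}(\eta/2)$ peaks at $\eta = 0$, rearrangement gives the worst-case bound $\int_{A^c} \cosh^{-2}(\eta/2)\,d\eta \leq \int_{-L/M}^{L/M}\cosh^{-2}(\eta/2)\,d\eta = 4\tanh(L/(2M))$. Combined with $\int_\RR \cosh^{-2}(\eta/2)\,d\eta = 4$, this yields
$$\int_A \frac{d\eta}{\cosh^2(\eta/2)} \;\geq\; 4 - 4\tanh\!\left(\tfrac{L}{2M}\right) \;=\; \frac{8}{e^{L/M}+1} \;\geq\; c_0\, e^{-\pi L/(lM)},$$
the last inequality using the normalisation $l = \pi/2$ (so $\pi/l = 2 \geq 1$) with a universal $c_0 > 0$. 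Combining the two estimates produces $-\partial_t f(x_t) \geq h(M, \|f_0\|_{L^\infty})\cdot c_0\, e^{-\pi L/(lM)}$, and the elementary observation that $M\mapsto M e^{L/M}$ attains a positive minimum on $(0, \|f_0\|_{L^\infty}]$ allows the factor $h \sim M$ to be absorbed into the exponential with a new constant $c$ depending on $\|f_0\|_{L^1}$, $\|f_0\|_{L^\infty}$, $l$ and the density jump. The main obstacle is the first step: picking the appropriate term in the decomposition \eqref{PIII} so that the resulting pointwise lower bound is uniform in $\eta$ while tracking the dependence on $M$ explicitly; the rest is measure theory and bookkeeping.
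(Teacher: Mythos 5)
Your proof is correct and shares the same skeleton as the paper's --- the identity $\partial_t f(x_t)=-\int_\RR\Pi(x_t,\eta,t)\cosh^{-2}(\eta/2)\,d\eta$ from Theorem~\ref{teo2}, a pointwise lower bound on one nonnegative summand of the rearranged $\Pi$, and a Chebyshev-type estimate exploiting $\|f(t)\|_{L^1}=\|f_0\|_{L^1}$ --- but the two implementations diverge at both key steps. You retain the \emph{fifth} summand and set $\tanh^2(\eta/2)\leq1$; since $(\tan\sigma+\tan\theta)^2+(1-\tan\sigma\tan\theta)^2=\sec^2\sigma\sec^2\theta$, this yields the clean pointwise bound $\Pi\geq\sin(\sigma+\theta)\sin\sigma\sin\theta$, which on your good set $A$ is of order $M$. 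The paper instead retains the \emph{third} summand and extracts a bound proportional to $\tan\theta$. For the measure estimate you work on all of $\RR$ and invoke the bathtub/rearrangement principle to control $\int_{A^c}\cosh^{-2}(\eta/2)\,d\eta$, while the paper truncates to an interval $[-r,r]$ and chooses $r=2\|f_0\|_{L^1}/f(x_t)$ at the end. One net consequence is that the paper cancels the small factor $\tan(f(x_t)/4)\geq f(x_t)/4$ directly against $|\mathcal{U}_1|\gtrsim\|f_0\|_{L^1}/f(x_t)$, whereas your bound leaves a residual $M$ that you absorb into the exponential via the observation that $M\mapsto M e^{\|f_0\|_{L^1}/M}$ has a positive infimum on $(0,\|f_0\|_{L^\infty}]$ --- correct, but one extra step. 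Both routes produce constants depending only on $\|f_0\|_{L^1}$, $\|f_0\|_{L^\infty}$, $l$ and the density jump, so yours is a valid alternative: the rearrangement-on-$\RR$ argument avoids the slightly ad hoc choice of truncation radius, while the paper's choice of $r$ cancels more cleanly and gives a more explicit constant.
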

\begin{proof} We conserve the notation and the hypothesis of the proof of Theorem~\ref{teo2}, $i.e.$ $f(x_t)=\|f(t)\|_{L^\infty}$, $\sigma=\frac{\pi}{2}-f(x_t)$ and $\theta=\frac{f(x_t)-f(x_t-\eta)}{2}$. We have the equation \eqref{PII} with $\Pi$ defined in \eqref{PIII}. Due to analysis in the proof of Theorem \ref{teo2}, we have the following bound
\begin{eqnarray*}
\Pi&\geq&\frac{\tan^2(\sigma)\tan(\theta)[1+\tan^2(\theta)\tanh^2\left(\frac{\eta}{2}\right)]}{[\tan^2(\theta)+\tanh^2\left(\frac{\eta}{2}\right)][(\tan(\sigma)+\tan(\theta))^2+(1-\tan(\sigma)\tan(\theta))^2\tanh^2\left(\frac{\eta}{2}\right)]}\\
&\geq&\frac{\tan(\theta)}{(\tan^2(\|f_0\|_{L^\infty})+1)^2+\tan^2(\|f_0\|_{L^\infty})}\frac{1}{1+\tan^2(\|f_0\|_{L^\infty})}.\\
\end{eqnarray*}
thus
\begin{equation}\label{inq-MP}
-\partial_tf(x_t)\geq\int_{\RR}\frac{\cosh^{-2}(\eta/2)\tan(\theta)}{((\tan^2(\|f_0\|_{L^\infty})+1)^2+\tan^2(\|f_0\|_{L^\infty}))(1+\tan^2(\|f_0\|_{L^\infty}))}d\eta.
\end{equation}
Let $r>0$ be a parameter that will be chosen below and define the interval $[-r,r]$. We consider the sets
$$
\mathcal{U}_1=\left\{\eta:\eta\in[-r,r], \theta\geq\frac{f(x_t)}{4}\right\}
$$
and
$$
\mathcal{U}_2=\left\{\eta:\eta\in[-r,r], \theta<\frac{f(x_t)}{4}\right\}.
$$
We observe that $\mathcal{U}_2$ is not empty for every $r>0$. The conservation of the total mass \eqref{mean} gives us a control for the measure of these sets. Indeed,
$$
\|f_0\|_{L^1}=\int_\RR f(x_t-\eta)d\eta\geq\int_{\mathcal{U}_2} f(x_t-\eta)d\eta>\frac{f(x_t)}{2}|\mathcal{U}_2|.
$$
Therefore 
$$|\mathcal{U}_1|=2r-|\mathcal{U}_2|\geq 2\left(r-\frac{\|f_0\|_{L^1}}{f(x_t)}\right).$$
Notice that if $r>\frac{\|f_0\|_{L^1}}{f(x_t)}$ we obtain that $|\mathcal{U}_1|>0$. Using \eqref{inq-MP}, we have
\begin{eqnarray*}
-\partial_tf(x_t)&\geq&\int_{\mathcal{U}_1}\frac{1}{\cosh^2(\eta/2)}\frac{\tan(\theta)}{((\tan^2(\|f_0\|_{L^\infty})+1)^2+\tan^2(\|f_0\|_{L^\infty}))(1+\tan^2(\|f_0\|_{L^\infty}))}d\eta\\
&\geq&\frac{1}{\cosh^2(r/2)}\frac{\tan(f(x_t)/4)}{((\tan^2(\|f_0\|_{L^\infty})+1)^2+\tan^2(\|f_0\|_{L^\infty}))(1+\tan^2(\|f_0\|_{L^\infty}))}|\mathcal{U}_1|\\
&\geq&\frac{rf(x_t)-\|f_0\|_{L^1}}{2\cosh^2(r/2)((\tan^2(\|f_0\|_{L^\infty})+1)^2+\tan^2(\|f_0\|_{L^\infty}))(1+\tan^2(\|f_0\|_{L^\infty}))}.
\end{eqnarray*}
Now, we fix
$$
r=2\frac{\|f_0\|_{L^1}}{f(x_t)}.
$$
Then, we get
\begin{multline*}
-\partial_tf(x_t)\geq\frac{\cosh^{-2}\left(\frac{\|f_0\|_{L^1}}{f(x_t)}\right)\|f_0\|_{L^1}}{2((\tan^2(\|f_0\|_{L^\infty})+1)^2+\tan^2(\|f_0\|_{L^\infty}))(1+\tan^2(\|f_0\|_{L^\infty}))}\\
\geq\frac{e^{-\frac{2\|f_0\|_{L^1}}{f(x_t)}}\|f_0\|_{L^1}}{2((\tan^2(\|f_0\|_{L^\infty})+1)^2+\tan^2(\|f_0\|_{L^\infty}))(1+\tan^2(\|f_0\|_{L^\infty}))}, 
\end{multline*}
and we conclude the proof. \end{proof}

\begin{coment}
We observe that in the whole plane case (see \cite{c-g09}) the decay rate is given by
$$
\partial_tf(x_t)\leq -c(\|f_0\|_{L^1},\|f_0\|_{L^\infty},\rho^1,\rho^2)(f(x_t))^2,
$$
so in the case without boundaries the decay is faster.
\end{coment}

\begin{coment}
As a corollary we conclude that there are no one-signed, integrable, steady state solutions. 
\end{coment}

\subsubsection{Maximum principle for $\|\pax f(t)\|_{L^\infty}$}

In this section we show the maximum principle for $\|\pax f (t)\|_{L^\infty}$ for a special class of initial data:

\begin{thm}\label{decayderiv}
Let $f_0\in H^3_l(\RR)$ be a smooth initial datum such that conditions \eqref{H3},\eqref{H4} and \eqref{H5} hold. Then, the solution $f(x,t)$ of equation \eqref{eq0.1} satisfies
\begin{equation}\label{MPD}
\|\pax f(t)\|_{L^\infty}\leq \|\pax f_0\|_{L^\infty}.
\end{equation}
Moreover, if $(x(l),y(l))$ is the solution of the system \eqref{sisder} and assuming that   $\|f_0\|_{L^\infty}<x(l)$ and $\|\pax f_0\|_{L^\infty}<y(l)$, we have that 
$$
\|\pax f (t)\|_{L^\infty}\leq 1.
$$
\end{thm}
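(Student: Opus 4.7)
The plan is to mimic the argument for Theorem \ref{teo2}, exploiting Rademacher's theorem at a point where $\|\pax f(t)\|_{L^\infty}$ is attained, but now one derivative higher. Let $x_t$ be such that $\pax f(x_t,t)=\|\pax f(t)\|_{L^\infty}$ (the case of the minimum is analogous). Differentiating \eqref{eq0.1} in $x$, evaluating at $x_t$ and using $\pax^2 f(x_t)=0$, I get the pointwise ODE
\begin{equation*}
\frac{d}{dt}\|\pax f(t)\|_{L^\infty}=\pat\pax f(x_t,t)=J_1+J_2,
\end{equation*}
where $J_i$ collects the contributions coming from $\Xi_i$, including the terms where $\pax$ has fallen on $\Xi_i$. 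In each $J_i$ the terms containing $\pax^2 f(x_t-\eta)$ are rewritten using $\pax^2 f(x_t-\eta)=-\partial_\eta\pax f(x_t-\eta)$ and integrated by parts, producing contributions involving $\partial_\eta\Xi_i$.

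For $J_1$ the strategy is the one from \cite{c-g09}: after the integration by parts, $J_1$ splits as the deep-water dissipative piece (of definite sign, negative at a maximum of $\pax f$) plus perturbative remainders coming from the difference between $\Xi_1$ and the Hilbert-type kernel in \eqref{full}. The remainders are handled as in the proof of Theorem \ref{teo1}: their singularity at $\eta=0$ is of the same order as the deep-water kernel, so Taylor expansion in $\eta$ combined with the bounds $\|\pax f\|_{L^\infty}\le 1$ from \eqref{H3} controls them. The net outcome is a strictly negative upper bound of the form
\begin{equation*}
J_1\le -4\tan\!\left(\tfrac{\pi}{2l}\|\pax f(t)\|_{L^\infty}\right)\cdots+\text{(lower order in $\|\pax f\|_{L^\infty}$)},
\end{equation*}
matching the dissipative term that appears in the left-hand side of \eqref{H5}.

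For $J_2$ the strategy is to treat $\Xi_2$ as a smooth perturbation everywhere except at the boundaries. Since \eqref{H4} (propagated via the first part of Theorem \ref{teo2}) keeps $\|f(t)\|_{L^\infty}$ bounded away from $l$ and $\|\pax f(t)\|_{L^\infty}\leq 1$, both $\Xi_2$ and $\pax\Xi_2$ remain bounded; the part of $J_2$ near $\eta=0$ is expanded in Taylor series in $\eta$, which introduces precisely the factors $\sec^4(\pi/(4l))$, $\tanh(\pi/(4l))^{-1}$ and the powers of $\pi/(2l)$ that appear in \eqref{H5}, while the tail $|\eta|\gtrsim 1$ contributes the $\tan(\pi\|f_0\|_{L^\infty}/(2l))$ and $\cos(\pi\|f_0\|_{L^\infty}/l)$ terms. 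Summing $J_1+J_2$ the hypothesis \eqref{H5} is exactly the statement that this sum is non-positive, which gives \eqref{MPD}.

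For the second assertion I argue by continuity. Call $\mathcal R$ the region in $(\|f_0\|_{L^\infty},\|\pax f_0\|_{L^\infty})$ defined by \eqref{H4}–\eqref{H5} together with $\|\pax f_0\|_{L^\infty}\leq 1$; the system \eqref{sisder} describes its upper-right boundary. If $(\|f_0\|_{L^\infty},\|\pax f_0\|_{L^\infty})$ lies strictly inside $\mathcal R$, then so does $(\|f(t)\|_{L^\infty},\|\pax f(t)\|_{L^\infty})$ for all $t\ge 0$: by Theorem \ref{teo2} the first coordinate cannot grow, and by the maximum principle just proved the second cannot grow either, so the hypotheses \eqref{H3}–\eqref{H5} are preserved in time and in particular $\|\pax f(t)\|_{L^\infty}\leq\|\pax f_0\|_{L^\infty}\leq 1$.

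The main obstacle I expect is the algebraic bookkeeping in Step 3: unlike \cite{c-g09}, the pointwise ODE mixes a dissipative piece with a non-definite piece coming from $\pax\Xi_2$, and the explicit constants in \eqref{H4}–\eqref{H5} are the outcome of carefully tracking Taylor remainders and the bounds on $\sec$, $\tan$, $\tanh$ evaluated at $\pi/(2l)$. Keeping these sharp enough that \eqref{H5} actually defines a non-empty region for the given $l$ (as illustrated in Figure~\ref{2d_0}) is the technically delicate point.
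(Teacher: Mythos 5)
Your overall plan — fix $x_t$ where $\|\pax f(t)\|_{L^\infty}$ is attained, use $\pax^2 f(x_t)=0$, differentiate, and balance local against nonlocal contributions — is in the right spirit, but three key steps differ from, or fail against, what the paper actually does.

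First, the dissipative mechanism. You differentiate \eqref{eq0.1} directly, rewrite $\pax^2 f(x_t-\eta)=-\partial_\eta\pax f(x_t-\eta)$ and integrate by parts, then invoke a ``deep-water dissipative piece of definite sign''. The paper does something structurally different: it differentiates the potential form \eqref{eqderiv}, i.e.\ $\pat\pax f(x_t)=\text{P.V.}\int\pax^2\arctan(\mu_1)+\pax^2\arctan(\mu_2)\,d\eta$, and this produces an explicit local dissipative term $-8\pax f(x_t)$ from the boundary of the principal value. That local term is what ultimately absorbs $I_1^{in}$ and the non-definite $I_2$; there is no signed ``deep-water integral'' in this argument. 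Your integration by parts in $\eta$ against $\Xi_1$ would on its own produce divergent boundary contributions at $\eta=0$ (since $\Xi_1\sim 2/(\eta(1+(\pax f)^2))$), so unless you pair them with the $(\pax f(x_t)-\pax f(x_t-\eta))\pax\Xi_1$ term before integrating by parts — which is essentially re-deriving $\pax^2\arctan(\mu_1)$ — the step is not well defined. You also claim $J_1\leq-4\tan(\pi\|\pax f(t)\|_{L^\infty}/(2l))+\cdots$, but the $4\tan$ in \eqref{H5} involves $\|f_0\|_{L^\infty}$, not $\|\pax f_0\|_{L^\infty}$, and in the paper it arises from the bound on $I_2$, not from $I_1$.

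Second, the role of the hypotheses. You assert that \eqref{H4} ``keeps $\|f(t)\|_{L^\infty}$ bounded away from $l$.'' In the paper, \eqref{H4} has a completely different purpose: writing $\pax^2\arctan(\mu_1)$ in terms of the quadratic $Q_1(\mu_1)=\pax f(x_t)\mu_1^2+(1-(\pax f(x_t))^2)\mu_1-\pax f(x_t)$, whose roots are $\pax f(x_t)$ and $-1/\pax f(x_t)$, one needs $|\mu_1|\leq\|\pax f\|_{L^\infty}$. Condition \eqref{H3} ensures the two roots are ordered correctly, and \eqref{H4} is precisely the estimate $|\mu_1|\leq\tan(\pi\|f_0\|_{L^\infty}/(2l))/\tanh(\pi/(4l))<\|\pax f_0\|_{L^\infty}$ valid for $|x_t-\eta|\geq 1$, making $I_1^{out}\leq 0$. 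Without this observation you have no control on the far field of the $\Xi_1$ integral.

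Third, the second assertion and propagation in time. Your closing argument is circular: you invoke ``the maximum principle just proved'' to conclude $\|\pax f(t)\|_{L^\infty}$ cannot grow, but the second assertion of the theorem concerns initial data in the region $\{\|f_0\|_{L^\infty}<x(l),\ \|\pax f_0\|_{L^\infty}<y(l)\}$ (Region B in Figure~\ref{2d_0}), where \eqref{H4}–\eqref{H5} need \emph{not} hold, so the first part does not directly apply. Moreover, even in Region A the set defined by \eqref{H4}–\eqref{H5} is not forward invariant in the naive sense you claim: the paper explicitly notes that if $\|\pax f\|_{L^\infty}$ \emph{decreases}, \eqref{H5} can temporarily fail. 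The actual argument is a barrier argument — whenever $\|\pax f(t)\|_{L^\infty}$ returns to the level $\|\pax f_0\|_{L^\infty}$ (or, for the second part, touches the curve defined by \eqref{sisder}), the hypotheses are again verified since $\|f(t)\|_{L^\infty}$ has not increased (Theorem~\ref{teo2}), so $\frac{d}{dt}\|\pax f\|_{L^\infty}\leq 0$ at that instant and the level cannot be exceeded. You would need to supply this continuity/barrier reasoning explicitly; ``the hypotheses are preserved in time'' is exactly what has to be shown and, as the paper notes, is false verbatim.
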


\begin{proof}Using the same method as in Theorem \ref{teo2} and the smoothness of $f$, we have that the evolution of $\pax f(x_t)=\|\pax f (t)\|_{L^\infty}$ is given by 
$$
\frac{d}{dt}\|\pax f\|_{L^\infty}=\pat \pax f(x_t).
$$
We suppose that $\pax f(x_t)=\max \pax f(x,t)$ in order to clarify the exposition, but the proof is analogous in the case where the norm is achieved by the minimum. Since \eqref{eq0.1} is equivalent to \eqref{eqderiv}, so we have to take a derivative in space in this equivalent formulation. The boundaries in the principal value integrals contributes with $-8\pax f(x_t)$. Thus we get
$$
\pat \pax f(x_t)=-8\pax f(x_t)+I_1+I_2,
$$
with
$$
I_1=2\text{P.V.}\int_\RR\pax^2\left(\arctan\left(\frac{\tan\left(\frac{f(x_t)-f(\eta)}{2}\right)}{\tanh\left(\frac{x_t-\eta}{2}\right)}\right)\right)d\eta,
$$
and
$$
I_2=2\text{P.V.}\int_\RR\partial_x^2\left(\arctan\left(\tan\left(\frac{f(x_t)+f(\eta)}{2}\right)\tanh\left(\frac{x_t-\eta}{2}\right)\right)\right)d\eta.
$$
We define
$$
\mu_1=\frac{\tan\left(\frac{f(x_t)-f(\eta)}{2}\right)}{\tanh\left(\frac{x_t-\eta}{2}\right)},\qquad \mu_2=\tan\left(\frac{f(x_t)+f(\eta)}{2}\right)\tanh\left(\frac{x_t-\eta}{2}\right).
$$
We compute
$$
2\pax^2\arctan\left(\mu_1\right)=\frac{\tanh^2((x_t-\eta)/2)}{\cosh^2\left((x_t-\eta)/2\right)\cos^2(\theta)}\frac{Q_1(x_t,\eta,t)}{(\tanh^2((x_t-\eta)/2)+\tan^2(\theta))^2},
$$
with
$$
Q_1=\pax f(x_t)\mu_1^2+(1-(\pax f(x_t))^2)\mu_1-\pax f(x_t),
$$
and
$$
2\pax^2\arctan\left(\mu_2\right)=\frac{1}{\cosh^2\left((x_t-\eta)/2\right)\cos^2(\bar{\theta})}\frac{Q_2(x_t,\eta,t)}{(1+\tanh^2((x_t-\eta)/2)\tan^2(\bar{\theta}))^2},
$$
with
$$
Q_2=-\pax f(x_t)\mu_2^2+((\pax f(x_t))^2-1)\mu_2+\pax f(x_t).
$$
Thus the sign of the integral terms are given by the sign of $Q_1$ and $Q_2$. $Q_i$ are polynomials in the variables $\mu_i$,  respectively. 

The roots of $Q_1$ are $\pax f(x_t)$ and $-1/\pax f(x_t)$, so if we have
$$
\left|\mu_1\right|\leq \min\left\{\|\pax f(t)\|_{L^\infty},\frac{1}{\|\pax f(t)\|_{L^\infty}}\right\},
$$
then we can ensure that the integral involving the increments of $f$ is negative. However we have that for $\eta=x_t$ the following equality holds
$$
\lim_{\eta\rightarrow x_t}\left|\frac{\tan\left(\frac{f(x_t)-f(\eta)}{2}\right)}{\tanh\left(\frac{x_t-\eta}{2}\right)}\right|=\|\pax f(t)\|_{L^\infty},
$$
and so we need that 
$\min\left\{\|\pax f(t)\|_{L^\infty},1/\|\pax f(t)\|_{L^\infty}\right\}=\|\pax f(t)\|_{L^\infty}$. Thus we impose condition \eqref{H3}.

Moreover, if $|x-\eta|\geq1$ then 
$$
\left|\mu_1\right|\leq\frac{\tan\left(\|f_0\|_{L^\infty}\right)}{\tanh\left(\frac{1}{2}\right)}< \|\pax f_0\|_{L^\infty},
$$
under the hypothesis \eqref{H4}. Then 
$$
I_1^{out}=2\text{P.V.}\int_{B^c(x_t,1)}\pax^2\left(\arctan\left(\frac{\tan\left(\frac{f(x_t)-f(\eta)}{2}\right)}{\tanh\left(\frac{x_t-\eta}{2}\right)}\right)\right)d\eta<0.
$$
We have to bound the following integral
$$
I_1^{in}=\text{P.V.}\int_{B(x_t,1)}\frac{\pax f(x_t)\left(\mu_1\right)^2+(1-(\pax f(x_t))^2)\mu_1-\pax f(x_t)}{\sinh^2\left((x_t-\eta)/2\right)\cos^2(\theta)\left(1+\left(\mu_1 \right)^2\right)}d\eta.
$$
Using the definition of $\theta$ in \eqref{theta} and the fact that $|x_t-\eta|\leq 1$, we have 
$$
\frac{\tan(\theta)}{\tanh((x_t-\eta)/2)}-\pax f(x_t)\leq \frac{(x_t-\eta)^2}{48\tanh\left(\frac{1}{2}\right)}\left(\pax f(x_t)+5(\pax f(x_t))^3\right).
$$ 
To obtain this we split as follows
\begin{multline*}
\frac{\tan(\theta)}{\tanh((x_t-\eta)/2)}-\pax f(x_t)=\frac{\tan(\theta)-\theta}{\tanh((x_t-\eta)/2)}\\
+\theta\left(\frac{1}{\tanh((x_t-\eta)/2)}-\frac{2}{x_t-\eta}\right)+\frac{2\theta}{x_t-\eta}-\pax f(x_t).
\end{multline*}
Taylor theorem and the fact that the function $(x_t-\eta)/\tanh(x_t-\eta)\leq 0.5/\tanh(0.5)$ in this region give us the desired bound. In the same way,
\begin{multline*}
\frac{\tan^2(\theta)}{\tanh^2((x_t-\eta)/2)}-(\pax f(x_t))^2\\
\leq\frac{(x_t-\eta)^2}{48\tanh\left(\frac{1}{2}\right)}\left(\pax f(x_t)+5(\pax f(x_t))^3\right)\left(\pax f(x_t)+\frac{\tan\left(\frac{\pax f(x_t)}{2}\right)}{\tanh\left(\frac{1}{2}\right)}\right).
\end{multline*}
Thus, using the cancellation when $\mu_1=\pax f(x_t)$, we obtain
\begin{multline*}
I_1^{in}\leq\frac{\left(\pax f(x_t)+5(\pax f(x_t))^3\right)\left(1+\pax f(x_t)\left(\pax f(x_t)+\frac{\tan\left(\frac{\pax f(x_t)}{2}\right)}{\tanh\left(\frac{1}{2}\right)}\right)\right)}{24\tanh(1/2)\cos^2(\|f(t)\|_{L^\infty})}\int_{0}^1\frac{\eta^2d\eta}{\sinh^2\left(\frac{\eta}{2}\right)}\\
\leq \frac{\left(\pax f(x_t)+5(\pax f(x_t))^3\right)\left(1+\pax f(x_t)\left(\pax f(x_t)+\frac{\tan\left(\frac{\pax f(x_t)}{2}\right)}{\tanh\left(\frac{1}{2}\right)}\right)\right)}{6\tanh(1/2)\cos^2(\|f(t)\|_{L^\infty})}.
\end{multline*}
We have
$$
I_2\leq\left|\int_\RR\frac{((\pax f(x_t))^2-1)\tanh((x_t-\eta)/2)\tan(\bar{\theta})+\pax f(x_t)}{\cosh^2\left((x_t-\eta)/2\right)\cos^2(\bar{\theta})(1+\tanh^2((x_t-\eta)/2)\tan^2(\bar{\theta}))^2}d\eta\right|.
$$
Easily we get
$$
I_2\leq4\frac{\tan(\|f(t)\|_{L^\infty})+\|\pax f(t)\|_{L^\infty}}{\cos^2(\|f(t)\|_{L^\infty})}.
$$
It remains to show that
$$
I_1^{in}+I_2-8\pax f(x_t)\leq0.
$$
We need to use the local term $-8\pax f(x_t)$ in order to control the remainder terms. Using the maximum principle (Theorem \ref{teo2}), we obtain
\begin{multline*}
\frac{\|\pax f_0\|_{L^\infty}+5\|\pax f_0\|_{L^\infty}^3}{6\tanh(1/2)}
\left(1+\|\pax f_0\|_{L^\infty}\left(\|\pax f_0\|_{L^\infty}+\frac{\tan\left(\frac{\|\pax f_0\|_{L^\infty}}{2}\right)}{\tanh(1/2)}\right)\right)\\
+4\tan(\|f_0\|_{L^\infty})+4\|\pax f_0\|_{L^\infty}(1-2\cos^2(\|f_0\|_{L^\infty}))\leq0,
\end{multline*}
which is the condition \eqref{H5}. 

We have shown that, if initially the previous conditions hold, there is local in time decay for $\|\pax f(t)\|_{L^\infty}$, but maybe these conditions are not satisfied for all time. Indeed, if $\|\pax f(t)\|_{L^\infty}$ decays faster enough then the condition \eqref{H5} is not global. But, if there exists $t_1>0$ such that $\|\pax f\|_{L^\infty}$ starts to grow, then there is a time $t^*>t_1$ so that the condition \eqref{H5} again holds, and \eqref{MPD} is achieved. The same is valid for the condition \eqref{H4}.

The last part in the Theorem is obvious using the maximum principle for $\|f(t)\|_{L^\infty}$ (see Theorem \ref{teo2}). This concludes the proof. \end{proof}

\begin{coment}
We observe that in the case $\mathcal{A}=0$ the condition depends only on $\|\pax f_0\|_{L^\infty}$. In the case $0<\mathcal{A}<1$, this appears to be impossible because of two facts: First, the term with $\bar{\theta}$ in \eqref{theta} gives us a condition on $\|f_0\|_{L^\infty}$, also we notice that the condition on $\tan\left(\frac{f(x_t)-f(\eta)}{2}\right)/\tanh\left(\frac{x_t-\eta}{2}\right)$ gives us implicitly a condition on $\|f(t)\|_{L^\infty}$. Indeed, we have
$$
\left|\lim_{|\eta|\rightarrow\infty}\frac{\tan\left(\frac{f(x_t)-f(\eta)}{2}\right)}{\tanh\left(\frac{x_t-\eta}{2}\right)}\right|=|\tan(f(x_t)/2)|\leq\|\pax f (t)\|_{L^\infty}.
$$
The second fact is that the term $\mu_1$ can be bounded \emph{below} by the incremential quotients, but if we want to bound it \emph{above} we have to use $\|\pax f(t)\|_{L^\infty}$ and $\|f(t)\|_{L^\infty}$.
\end{coment}

\begin{coment}
The region without decay but with an uniform bound (see Figure~\ref{2d_0}) appears due to the boundaries. This region does not appear in the case with infinite depth $\mathcal{A}=0$. We notice that if we now take the $l\rightarrow\infty$ limit we recover the well-known result (contained in \cite{c-g09}) for the whole plane case. Indeed, if $l\rightarrow\infty$, the conditions \eqref{H4} and \eqref{H5} are automatically achieved and we only have \eqref{H3} as in \cite{c-g09}.
\end{coment}

\subsection{Turning waves}\label{sec3.2}

Now, we can prove the existence of turning waves in the stable Rayleigh-Taylor regime. 

\begin{thm}\label{sing}
Take $\rho^ 2-\rho^ 1>0$. Then there exist analytic initial data $s_0=s(\alpha,0)$, that can be parametrized as a graph, such that the solution of \eqref{eqcurva} at finite time is no longer a graph.
\end{thm}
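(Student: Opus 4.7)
The strategy follows the one developed for the deep water case in \cite{ccfgl}. The core idea is to construct an analytic curve $z(\alpha,0)$ sitting exactly on the boundary of being a graph --- with a single degenerate point where $\partial_\alpha z_1$ vanishes --- and to show that the evolution at this point forces the interface to \emph{turn}. Running time slightly backwards using the analytic local existence result (Theorem \ref{curvaCK2}), which does not require the Rayleigh--Taylor condition and is valid both forward and backward in time, then produces an initial datum $s_0$ that is a strict graph, whose forward evolution reaches the degenerate configuration and subsequently becomes non-graph.

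The plan proceeds in three steps. First, the idea is to design an analytic curve $z(\alpha,0)$ with (a) the arc-chord condition and $\|z_2(\cdot,0)\|_{L^\infty}<l$; (b) $\partial_\alpha z_1(\alpha,0)\geq 0$ with equality only at $\alpha=0$; and (c) a convenient parity, e.g.\ $z_1(\cdot,0)-\alpha$ odd and $z_2(\cdot,0)$ even around $\alpha=0$, so that $\partial_\alpha z_1$ is even and $\partial_\alpha z_2$ is odd. A natural one-parameter candidate is $z(\alpha,0)=(\alpha-A\sin\alpha,\,B-A\cos\alpha)$ with $A=1$ and $B\in(0,l-1)$ appropriately chosen. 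The symmetry collapses the evaluation of the velocity at $\alpha=0$ and its derivative to integrals over $\RR^+$ with a fixed-sign integrand, which is the only way to extract a definite sign from the principal value in \eqref{eqcurva}.

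Second, one computes the quantity
\[
\partial_t\partial_\alpha z_1(0,0)
\]
by differentiating the first component of \eqref{eqcurva} in $\alpha$ and evaluating at the origin. The expression splits into a contribution from the $\Xi_1$-type kernel (the one already treated in \cite{ccfgl}) and a contribution from the $\Xi_2$-type kernel encoding the boundary effect. Under the symmetry ansatz, the first piece is known from \cite{ccfgl} to be strictly negative for an appropriate parameter range; the new task is to verify that the $\Xi_2$-piece, once added, preserves the strict inequality $\partial_t\partial_\alpha z_1(0,0)<0$. Given this, Theorem \ref{curvaCK2} yields an analytic solution on $[-T,T]$, and by continuity there exists $\delta\in(0,T)$ with
\[
\partial_\alpha z_1(0,\delta)<0,\qquad \partial_\alpha z_1(\alpha,-\delta)>0\ \text{for all}\ \alpha\in\RR,
\]
the latter using strict positivity of $\partial_\alpha z_1(\cdot,0)$ off the origin together with the uniform control on $\partial_\alpha z_1$ provided by the Hardy--Sobolev bound from the proof of Theorem \ref{curvaCK2}. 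Setting $s(\alpha,\tau):=z(\alpha,\tau-\delta)$ gives $s_0=z(\cdot,-\delta)$ an analytic graph whose solution $s(\cdot,2\delta)=z(\cdot,\delta)$ is no longer a graph, which is what the theorem asserts.

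The main obstacle is precisely the sign verification in step two: the boundary kernel $\Xi_2$ contributes a genuinely new term with respect to the infinite-depth analysis in \cite{ccfgl}, and one must check quantitatively that, for some admissible triple $(A,B,l)$, this contribution either has the right sign or is dominated by the (already negative) deep-water part. Two routes look viable: a perturbative one, sending $l\to\infty$ and leveraging the deep-water result by continuity in $l$ to obtain the desired sign on a non-empty open set of depths; and a direct quantitative one, bounding the $\Xi_2$ integral against its $\Xi_1$ counterpart for a concretely tuned profile. The numerical evidence indicated in Section \ref{sec3.3} suggests that boundaries actually enhance the turning mechanism, which makes the perturbative route particularly natural and selects the relevant regime of parameters.
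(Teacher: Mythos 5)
Your high-level plan matches the paper's: build an analytic curve with a degenerate tangent at a single point, show $\partial_t\partial_\alpha z_1(0,0)<0$ at that configuration, then use Theorem \ref{curvaCK2} (which needs no Rayleigh--Taylor condition and runs both forward and backward) to shift time so that the initial datum is a graph and the forward evolution ceases to be one. However, there are two concrete gaps.

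First, the parity ansatz is wrong. You propose $z_1(\cdot,0)-\alpha$ odd and $z_2(\cdot,0)$ even, so $\partial_\alpha z_2$ is odd and in particular $\partial_\alpha z_2(0)=0$. Combined with $\partial_\alpha z_1(0)=0$ this means the full tangent vector vanishes at $\alpha=0$: the degenerate point is a \emph{cusp}, not a vertical tangent. Your explicit candidate $(\alpha-A\sin\alpha,\;B-A\cos\alpha)$ with $A=1$ is literally a cycloid, which has a cusp at the origin. A cusp violates the arc-chord condition $|\partial_\alpha z|>0$ required in the hypotheses of Theorem \ref{curvaCK2} (it makes $d^-[z]$ blow up), so the backward-in-time analytic solution you rely on is not available; moreover the key quantity $\partial_\alpha v_1(0)$ comes out proportional to $\partial_\alpha z_2(0)$ once the symmetry is exploited, so with $\partial_\alpha z_2(0)=0$ you get $0$, not a strictly negative number. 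The correct symmetry is $z_1$ and $z_2$ both odd (conditions C1--C2 of the paper), which puts the tangent $(0,\partial_\alpha z_2(0))$ vertical and nonzero at $\alpha=0$, keeps the arc-chord condition, and produces the prefactor $2\,\partial_\alpha z_2(0)>0$ in the sign computation.

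Second, the sign verification for the confined kernel is left as a declared ``task'' with two proposed ``routes.'' Once the correct parity is imposed and you integrate by parts, the combined contribution from $\Xi_1$ and $\Xi_2$ takes the closed form \eqref{eqvelsing}: both kernels appear as strictly positive weights $\bigl(\cosh z_1-\cos z_2\bigr)^{-2}$ and $\bigl(\cosh z_1+\cos z_2\bigr)^{-2}$ multiplying the \emph{same} factor $\partial_\alpha z_1(\eta)\sinh(z_1(\eta))\sin(z_2(\eta))$. Thus the boundary term does not compete with the deep-water term --- it strictly reinforces its sign pointwise --- and neither the perturbative $l\to\infty$ argument nor a quantitative domination estimate is needed. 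The remaining work is only to design the profile so that the integral is negative; the paper does this with a piecewise-odd $z_2$ indexed by parameters $a,b$ for which the contribution near the origin tends to $0$ as $a\to\infty$ while the contribution over $(\pi/b,\pi)$ is strictly negative, then approximates by analytic curves. Your proposal contains neither this formula nor a working construction, so as written the central sign estimate is unproved.
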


\begin{proof} First, we show that there exists curves $z(\alpha)=(z_1(\alpha),z_2(\alpha))$ such that:
\begin{enumerate}
 \item[C1.] $z_i$ are analytic, odd functions.
 \item[C2.] $\partial_\alpha z_1(\alpha)>0, \forall \alpha\neq 0$, $\partial_\alpha z_1(0)=0$, and $\partial_\alpha z_2(0)>0$.
 \item[C3.] $\partial_\alpha v_1(0)=\partial_\alpha\pat z_1(0)<0$. 
\end{enumerate}
By integration by parts in expression \eqref{eqcurva} and using the definition of $z_i$ we obtain
\begin{multline}\label{eqvelsing}
\partial_\alpha v_1(0)=2\partial z_2(0)\int_0^\infty \partial_\alpha z_1(\eta)\sinh(z_1(\eta))\sin(z_2(\eta))\bigg{(}\frac{1}{(\cosh(z_1(\eta))-\cos(z_2(\eta)))^2}\\
+\frac{1}{(\cosh(z_1(\eta))+\cos(z_2(\eta)))^2}\bigg{)}d\eta.
\end{multline}
Now, we define piecewise smooth and odd curves $z(\alpha)=(z_1(\alpha),z_2(\alpha))$ (see Figure \ref{curvassing}) with components
$$
z_1(\alpha)=\alpha-\alpha\exp\left(-\alpha^2\right)
$$
and, fixed $2<b\leq a$ positive constants,
\begin{equation}\label{defz2}
z_2(\alpha)=\left\{
\begin{array}{lllll}\displaystyle \frac 1 a\sin(a\alpha) & \hbox{ if }  \displaystyle 0\leq \alpha\leq \frac \pi a,\\
\displaystyle \sin\left(\pi\frac{\alpha-(\pi/a)}{(\pi/a)-(\pi/b)}\right) & \text { if } \displaystyle\frac \pi a<\alpha<\frac\pi b,\\
\displaystyle -\alpha +\frac \pi b & \text{ if }\displaystyle  \frac \pi b\leq\alpha<\frac\pi 2,\\
\displaystyle\alpha - \pi+\frac\pi b & \text{ if } 
\displaystyle\frac \pi 2\leq\alpha<\pi (1-\frac 1 b),\\
\displaystyle 0 & \text{ if }
\displaystyle \pi(1-\frac 1 b)\leq\alpha.\\
                   \end{array}\right.
\end{equation}
Notice that C2 is achieved for this $(z_1,z_2)$.  Moreover, these curves satisfy the arc-chord condition in the whole domain. Using the definition of $z_2$ we have that 
$$
\partial_\alpha v_1(0)\leq2\partial z_2(0)(I_a+I_b),
$$
where $I_a,I_b$ are the integrals \eqref{eqvelsing} on the intervals $(0,\pi/a)$ and $(\pi/b,\pi)$, respectively. Easily, we show $I_b<0$ and this is independent of the choice of $a$. The integral $I_a$ is well defined and positive, but goes to zero as $a$ grows. Therefore, by approximating, there exists curves $(z_1,z_2)$ that satisfies the conditions C1--C3.

Now, we consider $(z_1,z_2)$ as the analytic initial datum for the equation \eqref{eqcurva}. By a Cauchy-Kowalevski Theorem, there exists a curve, $w(\alpha,t)$, solution of \eqref{eqcurva} for any $t\in[-T,T]$ (see Section \ref{sec2.3}). Due to C3, we get the following
\begin{enumerate}
\item for $-T<t<0$, we have $\min_\alpha \paa w_1(\alpha,t)>\min_\alpha \paa w_1(\alpha,0)=0$ and $s_0(\alpha)=s(\alpha,0)=w(\alpha,-T/2)$ can be parametrized as a graph.\\
\item At $t=0$, $w(\alpha,0)=s(\alpha,T/2)=z(\alpha)$ has a vertical tangent.\\
\item For $0<t<T$ we get $\min_\alpha \paa w_1(\alpha,t)<\min_\alpha \paa w_1(\alpha,0)=0$. Thus, for $0<t<T$, the curve is no longer a graph and the Rayleigh-Taylor condition is not satisfied in a neighbourhood of $\alpha=0$.
\end{enumerate}
\end{proof}

\begin{coment} This theorem implies that there exist initial data $f_0$, parametrized as graphs, such that the solution of \eqref{eq0.1} develops a blow up for $\|\pax f(t)\|_{L^\infty}$ at finite time $t_1$.
\end{coment}

\begin{figure}[t]
		\begin{center}
		\includegraphics[scale=0.35]{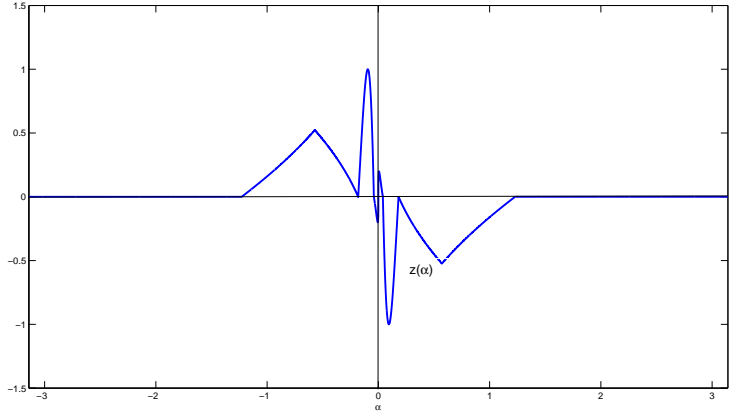} 
		\end{center}
		\caption{The curve in the case $a=5,b=3$.}
\label{curvassing}
\end{figure}

\subsection{Numerical evidence}\label{sec3.3}
In this Section we obtain firm numerical evidence showing that the confined problem is more singular than the problem with infinite depth \eqref{full}. The precise statement of this fact is the following: {\it We consider a strip with width equal to $l$, a fixed constant. Then there exists initial data $z_0(\alpha)=(z_1(\alpha),z_2(\alpha))$ that can not be parametrized as graphs such that the solution of $\eqref{eqcurva}$ achieve the (Rayleigh-Taylor) unstable case and, if you consider the same initial datum when the depth is infinite, the same curves becomes graphs.}

It is enough to show that there exist smooth curves $z(\alpha,0)=(z_1(\alpha,0),z_2(\alpha,0))$ satisfying arc-chord condition and such that $\paa z_1(0,0)=0$ and the following holds:
\begin{enumerate}
 \item $\partial_\alpha v_1(0,0)=\partial_\alpha\pat z_1(0,0)>0$ in the deep water regime,
 \item $\partial_\alpha v_1(0,0)=\partial_\alpha\pat z_1(0,0)<0$  when the strip is considered.
\end{enumerate}
Indeed, if $\paa v_1(0,0)=\partial_\alpha\pat z_1(0,0)>0$ then denoting $m(t)=\min_\alpha \paa z_1(\alpha,t)$, we have $m(0)=\paa z_1(0,0)=0$ and $\frac{d}{dt}m(t)>0$ for $t>0$ small enough. This implies $m(\delta)>0$ for a small enough $\delta>0$ and the curve can be parametrized as a graph. If $\paa v_1(0,0)=\partial_\alpha\pat z_1(0,0)<0$, then $m(t)<0$ if $t$ is small enough and the curve can not be parametrized as a graph.

We construct a piecewise smooth curve such that both conditions holds (see Figure \ref{curvassing}). We take $z_1$ defined as follows
$$
z_1(\alpha)=\alpha-e^{-\alpha^2 k}\sin(\alpha),
$$
with $k=10^{-4}$. The idea is to take $k<<1$ such that $e^{-\alpha^2 k}\approx 1,$ for $-\pi<\alpha<\pi$. Moreover, we take $z_2$ as in \eqref{defz2} with $a=b=3$, i.e.,
\begin{equation*}
z_2(\alpha)=\left\{
\begin{array}{lllll}\displaystyle \frac 1 3\sin(3\alpha) & \hbox{ if }  \displaystyle 0\leq \alpha\leq \frac \pi 3,\\
\displaystyle -\alpha +\frac \pi 3 & \text{ if }\displaystyle  \frac \pi 3\leq\alpha<\frac\pi 2,\\
\displaystyle\alpha - \frac{2\pi} 3 & \text{ if } 
\displaystyle\frac \pi 2\leq\alpha<\frac{2\pi} 3,\\
\displaystyle 0 & \text{ if }
\displaystyle \frac{2\pi} 3\leq\alpha.\\
                   \end{array}\right.
\end{equation*} 
Notice that, in the deep water regime, the expression \eqref{eqvelsing} takes the form
$$
\frac{\partial_\alpha v_1(0)}{2}=4\partial z_2(0)\int_0^\infty \frac{\partial_\alpha z_1(\eta)z_1(\eta)z_2(\eta)}{(z_1(\eta))^2+(z_2(\eta))^2)^2}d\eta.
$$
Substituting the choice of $z$, we need to compute
\begin{equation}\label{comsing1}
\frac{\partial_\alpha v_1(0)}{2}=\mathcal{I}_1+\mathcal{I}_2+\mathcal{I}_3,
\end{equation}
where
$$
\mathcal{I}_1=\frac{4}{3}\int_0^\frac{\pi}{3} \frac{(1-\cos(\eta)e^{-\eta^2 k}+2k\eta e^{-\eta^2 k}\sin(\eta))(\eta-e^{-\eta^2 k}\sin(\eta))\sin(3\eta)
}{(\eta-e^{-\eta^2 k}\sin(\eta))^2+(\sin(3\eta)/3
)^2)^2}d\eta,
$$
$$
\mathcal{I}_2=4\int_\frac{\pi}{3}^\frac{\pi}{2} \frac{(1-\cos(\eta)e^{-\eta^2 k}+2k\eta e^{-\eta^2 k}\sin(\eta))(\eta-e^{-\eta^2 k}\sin(\eta))(-\eta +\pi/3)}{(\eta-e^{-\eta^2 k}\sin(\eta))^2+(-\eta +\pi/3)^2)^2}d\eta,
$$
and
$$
\mathcal{I}_3=4\int_\frac{\pi}{2}^\frac{2\pi}{3} \frac{(1-\cos(\eta)e^{-\eta^2 k}+2k\eta e^{-\eta^2 k}\sin(\eta))(\eta-e^{-\eta^2 k}\sin(\eta))(\eta -2\pi/3)}{(\eta-e^{-\eta^2 k}\sin(\eta))^2+(\eta-2\pi/3)^2)^2}d\eta.
$$
In the finite depth case the integrals appearing in \eqref{eqvelsing} are
\begin{equation}\label{comsing2}
\frac{\partial_\alpha v_1(0)}{2}=\mathcal{I}_4+\mathcal{I}_5+\mathcal{I}_6,
\end{equation}
where
\begin{multline*}
\mathcal{I}_4=\int_0^\frac{\pi}{3} (1-\cos(\eta)e^{-\eta^2 k}+2k\eta e^{-\eta^2 k}\sin(\eta))\sinh(\eta-e^{-\eta^2 k}\sin(\eta))\sin(\sin(3\eta)/3
)\\\cdot\bigg{(}\frac{1}{(\cosh(\eta-e^{-\eta^2 k}\sin(\eta))-\cos(\sin(3\eta)/3
))^2}\\
+\frac{1}{(\cosh(\eta-e^{-\eta^2 k}\sin(\eta))+\cos(\sin(3\eta)/3
))^2}\bigg{)}d\eta,
\end{multline*}

\begin{multline*}
\mathcal{I}_5=\int_\frac{\pi}{3}^\frac{\pi}{2} (1-\cos(\eta)e^{-\eta^2 k}+2k\eta e^{-\eta^2 k}\sin(\eta))\sinh(\eta-e^{-\eta^2 k}\sin(\eta))\sin(-\eta +\pi/3)\\
\cdot\bigg{(}\frac{1}{(\cosh(\eta-e^{-\eta^2 k}\sin(\eta))-\cos(-\eta +\pi/3))^2}\\
+\frac{1}{(\cosh(\eta-e^{-\eta^2 k}\sin(\eta))+\cos(-\eta +\pi/3))^2}\bigg{)}d\eta,
\end{multline*}
and
\begin{multline*}
\mathcal{I}_6=\int_\frac{\pi}{2}^\frac{2\pi}{3} (1-\cos(\eta)e^{-\eta^2 k}+2k\eta e^{-\eta^2 k}\sin(\eta))\sinh(\eta-e^{-\eta^2 k}\sin(\eta))\sin(\eta-2\pi/3)\\
\cdot\bigg{(}\frac{1}{(\cosh(\eta-e^{-\eta^2 k}\sin(\eta))-\cos(\eta-2\pi/3))^2}\\
+\frac{1}{(\cosh(\eta-e^{-\eta^2 k}\sin(\eta))+\cos(\eta-2\pi/3))^2}\bigg{)}d\eta.
\end{multline*}

In order to obtain the sign of \eqref{comsing1} and \eqref{comsing2}, we compute the integrals $\mathcal{I}_i,\; (i=2,3,5,6)$ using the trapezoidal rule with a fine enough mesh (see Figure \ref{sing1}). The integrals $\mathcal{I}_i,\;(i=1,4)$ are approximated by
$$
\mathcal{I}'_1=\frac{4}{3}\int_{0.1}^\frac{\pi}{3} \frac{(1-\cos(\eta)e^{-\eta^2 k}+2k\eta e^{-\eta^2 k}\sin(\eta))(\eta-e^{-\eta^2 k}\sin(\eta))\sin(3\eta)
}{(\eta-e^{-\eta^2 k}\sin(\eta))^2+(\sin(3\eta)/3
)^2)^2}d\eta,
$$
and
\begin{multline*}
\mathcal{I}'_4=\int_{0.1}^\frac{\pi}{3} (1-\cos(\eta)e^{-\eta^2 k}+2k\eta e^{-\eta^2 k}\sin(\eta))\sinh(\eta-e^{-\eta^2 k}\sin(\eta))\sin(\sin(3\eta)/3
)\\\cdot\bigg{(}\frac{1}{(\cosh(\eta-e^{-\eta^2 k}\sin(\eta))-\cos(\sin(3\eta)/3
))^2}\\
+\frac{1}{(\cosh(\eta-e^{-\eta^2 k}\sin(\eta))+\cos(\sin(3\eta)/3
))^2}\bigg{)}d\eta.
\end{multline*}
The truncation of the integral domains in $\mathcal{I}'_i,\; (i=1,4)$ gives us an error $E_{PV}\leq 0.72\cdot 10^{-3}$. To obtain this bound we notice that, due to the particular choice of $z_i$,
$$
\int_0^{x}\frac{\partial_\alpha z_1(\eta)z_1(\eta)z_2(\eta)}{(z_1(\eta))^2+(z_2(\eta))^2)^2}d\eta= O(x^3),
$$
and the same is valid for the relevant integral in the presence of boundaries \eqref{eqvelsing}.
 
The other error is coming from the method used in the numerical quadrature. We use the trapezoidal rule, obtaining $E_I\leq 1.1\cdot10^{-3}$. We conclude that, if $\hat{\paa v_1(0)}$ denotes the numerical approximation of $\paa v_1(0)$ defined in \eqref{comsing2}, we have
$$
\paa v_1(0)\leq \hat{\paa v_1(0)}+|E_{PV}|+|E_{I}|<0,
$$ 
and, analogously, in the case where $\paa v_1(0)$ is defined in \eqref{comsing1} we get
$$
0< \hat{\paa v_1(0)}-|E_{PV}|-|E_{I}|\leq\paa v_1(0).
$$

Finally, we approximate this $z_0$ by analytic functions. This shows that the problem with finite depth appears to be, in this precise sense, more singular than the case $\mathcal{A}=0$.

In order to complete a rigorous enclosure of the integral, we are left with the bounding of the errors coming from the floating point representation and the computer operations and their propagation. In a forthcoming paper (see \cite{GG}) we will deal with this matter. By using interval arithmetics, we will give a computer assisted proof of this result.

\begin{figure}[t]
		\begin{center}
		\includegraphics[scale=0.3]{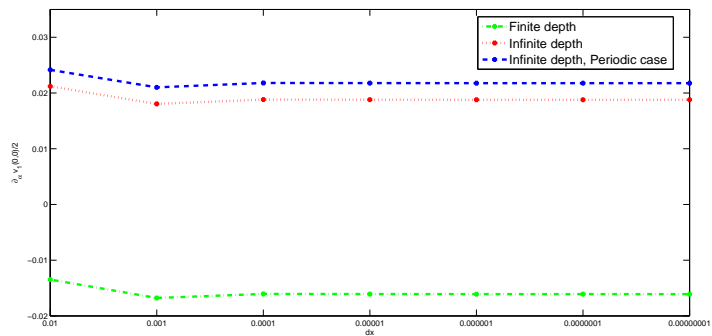} 
		\end{center}
		\caption{Approximating $\partial_\alpha v_1(0)/2$ with different spatial step $dx$.}		
		\label{sing1}
\end{figure}

\medskip

{\bf Acknowledgement.} The authors are supported by the Grants MTM2011-26696 and SEV-2011-0087 from Ministerio de Ciencia e Innovaci\'on (MICINN). Diego C\' ordoba was partially supported by StG-203138CDSIF of the ERC. The authors are grateful to A. Castro and F. Gancedo for their helpful comments during the preparation of this work. The authors would like to thank the
referees for their help in improving the manuscript.

\medskip


\begin{thebibliography}{10}
 
\bibitem{ambrose2004well}
D.~Ambrose.
\newblock Well-posedness of two-phase Hele-Shaw flow without surface tension.
\newblock {\em European Journal of Applied Mathematics}, 15(5):597--607, 2004.

\bibitem{bakan2007hardy}
A.~Bakan and S.~Kaijser.
\newblock Hardy spaces for the strip.
\newblock {\em Journal of mathematical analysis and applications},
  333(1):347--364, 2007.

\bibitem{bear}
J.~Bear.
\newblock {\em {Dynamics of fluids in porous media}}.
\newblock Dover Publications, 1988.

\bibitem{bona2008asymptotic}
J.~Bona, D.~Lannes, and J.~Saut.
\newblock Asymptotic models for internal waves.
\newblock {\em Journal de Math{\'e}matiques Pures et Appliqu{\'e}s},
  89(6):538--566, 2008.

\bibitem{castro2012breakdown}
A.~Castro, D.~Cordoba, C.~Fefferman, and F.~Gancedo.
\newblock Breakdown of smoothness for the Muskat problem.
\newblock {\em To appear in Arch. Rat. Mech. Anal.}, 2012.

\bibitem{ccfgl}
A.~Castro, D.~Cordoba, C.~Fefferman, F.~Gancedo, and M.~Lopez-Fernandez.
\newblock Rayleigh-Taylor breakdown for the Muskat problem with applications to
  water waves.
\newblock {\em Annals of Math}, 175:909--948, 2012.

\bibitem{ccgs-10}
P.~Constantin, D.~Cordoba, F.~Gancedo, and R.~Strain.
\newblock On the global existence for the Muskat problem.
\newblock {\em J. Eur. Math. Soc.}, 15, 201-227, 2013.

\bibitem{c-c-g10}
A.~Cordoba, D.~C{\'o}rdoba, and F.~Gancedo.
\newblock {Interface evolution: the Hele-Shaw and Muskat problems}.
\newblock {\em Annals of Math}, 173, no. 1:477--542, 2011.

\bibitem{c-g07}
D.~C{\'o}rdoba and F.~Gancedo.
\newblock {Contour dynamics of incompressible 3-D fluids in a porous medium
  with different densities}.
\newblock {\em Communications in Mathematical Physics}, 273(2):445--471, 2007.

\bibitem{c-g09}
D.~C{\'o}rdoba and F.~Gancedo.
\newblock A maximum principle for the Muskat problem for fluids with different
  densities.
\newblock {\em Communications in Mathematical Physics}, 286(2):681--696, 2009.

\bibitem{c-g-o08}
D.~C{\'o}rdoba, F.~Gancedo, and R.~Orive.
\newblock A note on interface dynamics for convection in porous media.
\newblock {\em Physica D: Nonlinear Phenomena}, 237(10-12):1488--1497, 2008.

\bibitem{e-m10}
J.~Escher and B.~Matioc.
\newblock On the parabolicity of the Muskat problem: Well-posedness, fingering,
  and stability results.
\newblock {\em Arxiv preprint arXiv:1005.2512}, 2010.

\bibitem{F}
A.~Friedman.
\newblock Free boundary problems arising in tumor models.
\newblock {\em Atti Accad. Naz. Lincei Cl. Sci. Fis. Mat. Natur. Rend.
  Lincei,}, 9(3-4), 2004.

\bibitem{GG}
 J. G\'omez-Serrano and R.Granero-Belinch\'on.
 \newblock On turning waves for the inhomogeneous Muskat problem: a computer-assisted proof,
 \newblock {\em Preprint}.

\bibitem{H-S}
H.~Hele-Shaw.
\newblock Flow of water.
\newblock {\em Nature}, 58(1509):520--520, 1898.

\bibitem{KK}
H.~Kawarada and H.~Koshigoe.
\newblock Unsteady flow in porous media with a free surface.
\newblock {\em Japan Journal of Industrial and Applied Mathematics},
  8(1):41--84, 1991.

\bibitem{knupfer2010darcy}
H.~Kn{\"u}pfer and N.~Masmoudi.
\newblock Darcy flow on a plate with prescribed contact angle—well-posedness
  and lubrication approximation.
\newblock {\em Preprint}, 2010.

\bibitem{bertozzi-Majda}
A.~Majda and A.~Bertozzi.
\newblock {\em {Vorticity and incompressible flow}}.
\newblock Cambridge Univ Pr, 2002.

\bibitem{Muskat}
M.~Muskat.
\newblock The flow of homogeneous fluids through porous media.
\newblock {\em Soil Science}, 46(2):169, 1938.

\bibitem{bn}
D.~Nield and A.~Bejan.
\newblock {\em {Convection in porous media}}.
\newblock Springer Verlag, 2006.

\bibitem{nirenberg1972abstract}
L.~Nirenberg.
\newblock An abstract form of the nonlinear Cauchy-Kowalewski theorem.
\newblock {\em J. Differential Geometry}, 6:561--576, 1972.

\bibitem{nishida1977note}
T.~Nishida.
\newblock A note on a theorem of Nirenberg.
\newblock {\em J. Differential Geometry}, 12:629--633, 1977.

\bibitem{P}
C.~Pozrikidis.
\newblock Numerical simulation of blood and interstitial flow through a solid
  tumor.
\newblock {\em Journal of Mathematical Biology}, 60(1):75--94, 2010.

\bibitem{SCH}
M.~Siegel, R.~Caflisch, and S.~Howison.
\newblock Global existence, singular solutions, and ill-posedness for the
  Muskat problem.
\newblock {\em Communications on Pure and Applied Mathematics},
  57(10):1374--1411, 2004.
 
\end{thebibliography}
\end{document}